\newtheorem{theo}{Theorem}
\newtheorem{prop}[theo]{Proposition}
\newtheorem{lemma}[theo]{Lemma}
\theoremstyle{remark}
\newtheorem{remark}{Remark}
\def\Var{\mathop{\rm Var}\nolimits}
\def\Cov{\mathop{\rm Cov}\nolimits}
\def\rset{\mathbb{R}}
\def\PE{\mathbb{E}}
\def\PP{\mathbb{P}}
\def\X{\mathcal{X}}
\def\E{\mathcal{E}}
\def\B{\mathcal{B}}
\def\lambdaMax{\lambda_{\rm{max}}}
\def\lambdaMin{\lambda_{\rm{min}}}
\newcommand{\sign}{\ensuremath{\rm sign}}
\begin{document}

\title[Variable selection in multivariate linear models]{Variable selection in multivariate linear models with high-dimensional covariance
matrix estimation}

\date{\today}

\author{M.\,Perrot-Dockès}
\address{UMR MIA-Paris, AgroParisTech, INRA, Universit\'e Paris-Saclay, 75005, Paris, France}
\email{marie.perrot-dockes@agroparistech.fr}
\author{C.\,L\'evy-Leduc}
\address{UMR MIA-Paris, AgroParisTech, INRA, Universit\'e Paris-Saclay, 75005, Paris, France}
\email{celine.levy-leduc@agroparistech.fr}
\author{L.\,Sansonnet}
\address{UMR MIA-Paris, AgroParisTech, INRA, Universit\'e 
Paris-Saclay, 75005, Paris, France}
\email{laure.sansonnet@agroparistech.fr}
\author{J.\,Chiquet}
\address{UMR MIA-Paris, AgroParisTech, INRA, Universit\'e 
Paris-Saclay, 75005, Paris, France}
\email{julien.chiquet@agroparistech.fr}

\begin{abstract}
In this paper, we propose a novel variable selection approach in the framework of multivariate linear models
taking into account the dependence that may exist between the responses.
It consists in estimating beforehand the covariance matrix $\Sigma$ of the responses and 
to plug this estimator in a Lasso criterion, in order to obtain a sparse estimator of
the coefficient matrix. The properties of our approach are investigated both from a theoretical and
a numerical point of view. More precisely, we give general conditions that the estimators of
the covariance matrix and its inverse have to satisfy in order to recover the positions of the
null and non null entries of the coefficient matrix when the size of $\Sigma$ is not fixed and can tend to infinity.
We prove that these conditions are satisfied in the particular case of some Toeplitz matrices.
Our approach is implemented in the R package \textsf{MultiVarSel} available from the Comprehensive R Archive Network (CRAN) and
 is very attractive since it benefits from a low computational load. 
We also assess the performance of our methodology using synthetic data and compare it with alternative approaches.
Our numerical experiments show that including the estimation of the covariance matrix
in the Lasso criterion dramatically improves the variable selection performance in many cases.
\end{abstract}

\maketitle

\section{Introduction}

The multivariate linear model consists in generalizing the classical linear model, in which a single
response is explained by $p$ variables, to the case where the number $q$ of responses is larger than 1.
Such a general modeling can be used in a wide variety of applications ranging from econometrics (\cite{lutkepohl:2005}) to
bioinformatics (\cite{Meng:2014}). In the latter field, for instance, multivariate models have been used to gain insight into complex
biological mechanisms like metabolism or gene regulation. This has been made possible thanks to recently developed sequencing technologies. 
For further details, we refer the reader to \cite{mehmood:liland:2012}.
However,  the downside of such a technological expansion is to include irrelevant variables in the statistical models. 
To circumvent this, devising efficient variable selection approaches in the multivariate setting has become 
a growing concern.


A first naive approach to deal with the variable selection issue in the multivariate setting consists in applying classical univariate variable selection strategies to each response 
separately. Some well-known variable selection methods include the least absolute shrinkage
and selection operator (LASSO) proposed by \cite{Tib96} and the smoothly clipped absolute deviation (SCAD) approach devised by
\cite{fan:li:2001}. However, such a strategy does not take into account the dependence that may exist between the different responses.

In this paper, we shall consider the following multivariate linear model:
\begin{equation} \label{eq:model:matriciel}
    Y=XB+E,
\end{equation}
where $Y=(Y_{i,j})_{1 \leq i \leq n,1 \leq j \leq q}$ denotes the $n\times q$ random response matrix, $X$ denotes the $n \times p$ design matrix,  $B$ denotes a $p \times q$ coefficient matrix
and $E=(E_{i,j})_{1 \leq i \leq n,1 \leq j \leq q}$ denotes the $n\times q$ random error matrix, where $n$ is the sample size. In order to model the potential dependence that may exist between the columns of $E$, we shall assume that for each $i$ in $\{1,\ldots,n\}$,
\begin{equation} \label{eq:def_E}
    (E_{i,1},\ldots,E_{i,q}) \sim \mathcal{N}(0,\Sigma),
\end{equation}
where $\Sigma$ denotes the covariance matrix of the $i$th row of the error matrix $E$. 
We shall moreover assume that the different rows of $E$ are independent. 
With such assumptions, there is some dependence between the columns of $E$ but not between the rows.
Our goal is here to design a variable selection approach which is able to identify the positions of the null and non null entries
in the sparse matrix $B$ by taking into account the dependence between the columns of $E$.

This issue has recently been considered by \cite{Lee-Liu-2012} who extended the approach of \cite{rothman:2010}.
More precisely, \cite{Lee-Liu-2012} proposed three approaches for dealing with this issue
based on penalized maximum likelihood with a weighted $\ell_1$ regularization. In their first approach
$B$ is estimated by using a plug-in estimator of $\Sigma^{-1}$, in the second one,  $\Sigma^{-1}$ is estimated 
by using a plug-in estimator of $B$ and in the third one, $\Sigma^{-1}$ and $B$ are estimated simultaneously.
\cite{Lee-Liu-2012} also investigate the asymptotic properties of their methods when the sample size $n$ tends to infinity
and the number of rows and columns $q$ of $\Sigma$ is fixed.

In this paper, we propose to estimate $\Sigma$ beforehand and to plug this estimator in a Lasso criterion, in order to obtain a sparse estimator of $B$.
Hence, our methodology is close to the first approach of \cite{Lee-Liu-2012}. However, there are two main differences:
The first one is the asymptotic framework in which our theoretical results are established and the second one is the strategy that we use for estimating $\Sigma$.
More precisely, in our asymptotic framework, $q$ is allowed to depend on $n$ and thus to tend to infinity as $n$ tends to infinity at a polynomial rate.
Moreover, in \cite{Lee-Liu-2012}, $\Sigma^{-1}$ is estimated by using an adaptation of the Graphical Lasso (GLASSO) proposed by \cite{friedman:hastie:tibshirani:2008}. This technique
has also been considered by \cite{yuan:li:2007}, \cite{banerjee:2008} and \cite{rothman:2008}. In this paper, we give general conditions that the estimators of $\Sigma$ and $\Sigma^{-1}$ have 
to satisfy in order to be able to recover the support of $B$
that is to find the positions of the null and non null entries of the matrix $B$. We prove that when $\Sigma$ is a particular Toeplitz matrix, namely the covariance matrix of
an AR(1) process, the assumptions of the theorem are satisfied.

Let us now describe more precisely our methodology. We start by ``whitening'' the observations $Y$ by applying the following transformation to Model (\ref{eq:model:matriciel}):
\begin{equation} \label{eq:modele:blanchi_est}
	Y\,\Sigma^{-1/2} = XB\,\Sigma^{-1/2} + E\,\Sigma^{-1/2}.
\end{equation}
The goal of such a transformation is to remove the dependence between the columns of $Y$. Then, for estimating $B$, we proceed as follows. Let us observe that
(\ref{eq:modele:blanchi_est}) can be rewritten as:
\begin{equation} \label{eq:model_vec}
	\mathcal{Y}=\mathcal{X}\mathcal{B}+\mathcal{E},
\end{equation}
with
\begin{equation} \label{eq:def:Y_X_E}
    \mathcal{Y}=vec(Y\,\Sigma^{-1/2}), \,
    \mathcal{X}=(\Sigma^{-1/2})' \otimes X, \,
    \mathcal{B}=vec(B) \textrm{\, and \,}
    \mathcal{E}=vec(E\,\Sigma^{-1/2}),
\end{equation}
where $vec$ denotes the vectorization operator and $\otimes$ the Kronecker product.

With Model~\eqref{eq:model_vec}, estimating $B$ is equivalent to estimate $\mathcal{B}$ since $\mathcal{B}=vec(B)$. Then, for estimating $\mathcal{B}$, we use the classical LASSO criterion defined as follows for a nonnegative $\lambda$:
\begin{equation} \label{eq:lasso}
    \widehat{\mathcal{B}}(\lambda) = \textrm{Argmin}_\mathcal{B}
    \left\{\|\mathcal{Y}-\mathcal{X}\mathcal{B}\|_2^2 +
    \lambda\|\mathcal{B}\|_1\right\},
\end{equation}
where $\|.\|_1$ and $\|.\|_2$ denote the classical $\ell_1$-norm and $\ell_2$-norm, respectively. Inspired by \cite{zhao:yu:2006}, Theorem~\ref{th1} established some conditions under which the positions of the null and non null entries of $\mathcal{B}$ can be recovered by using $\widehat{\mathcal{B}}$.

In practical situations, the covariance matrix $\Sigma$ is generally unknown and has thus to be estimated.  Let $\widehat{\Sigma}$ denote 
an estimator of $\Sigma$. Then, the estimator $\widehat{\Sigma}^{-1/2}$ of $\Sigma^{-1/2}$ is such that
$$
\widehat{\Sigma}^{-1}=\widehat{\Sigma}^{-1/2} (\widehat{\Sigma}^{-1/2})'.
$$
When $\Sigma^{-1/2}$ is replaced by $\widehat{\Sigma}^{-1/2}$, \eqref{eq:modele:blanchi_est} becomes
\begin{equation} \label{eq:modele:blanchi_est_hat}
    Y\,\widehat{\Sigma}^{-1/2} = XB\,\widehat{\Sigma}^{-1/2} + E\,\widehat{\Sigma}^{-1/2},
\end{equation}	
which can be rewritten as follows:	
\begin{equation} \label{eq:model_vec_tilde}
    \widetilde{\mathcal{Y}} = \widetilde{\mathcal{X}}\mathcal{B} + \widetilde{\mathcal{E}},
\end{equation}
where
\begin{equation} \label{eq:def:Y_X_E_tilde}
    \widetilde{\mathcal{Y}} = vec(Y\,\widehat{\Sigma}^{-1/2}), \,
    \widetilde{\mathcal{X}} = (\widehat{\Sigma}^{-1/2})' \otimes X, \,
    \mathcal{B} = vec(B) \textrm{\, and \,}
    \widetilde{\mathcal{E}} = vec(E\,\widehat{\Sigma}^{-1/2}).
\end{equation}
In Model~\eqref{eq:model_vec_tilde}, $\B$ is estimated by
\begin{equation} \label{eq:lasso_tilde}
    \widetilde{\mathcal{B}}(\lambda) = \textrm{Argmin}_\mathcal{B}
    \left\{\|\widetilde{\mathcal{Y}}-\widetilde{\mathcal{X}}\mathcal{B}\|_2^2 +
    \lambda\|\mathcal{B}\|_1\right\}.
\end{equation}
By extending Theorem~\ref{th1}, Theorem~\ref{th2} gives some conditions on the eigenvalues of $\Sigma^{-1}$ and on the convergence rate of $\widehat{\Sigma}$ and its inverse to $\Sigma$ and
$\Sigma^{-1}$, respectively, under which the positions of the null and non null entries of $\mathcal{B}$ can be recovered by using $\widetilde{\mathcal{B}}$.
 
We prove in Section \ref{subsec:AR1} that when $\Sigma$ is a particular Toeplitz matrix, namely the covariance matrix of
an AR(1) process, the assumptions of Theorem~\ref{th2} are satisfied. This strategy has been implemented in the R package \textsf{MultiVarSel}, which is available on the Comprehensive R Archive
Network (CRAN), for more general Toeplitz matrices $\Sigma$ such as the covariance matrix of ARMA processes or general stationary processes.
For a successful application of this methodology to particular ``-omic'' data, namely metabolomic data, we refer the reader to  \cite{nous:bioinfo:arxiv:2017}.
For a review of the most recent methods for estimating high-dimensional covariance matrices,
we refer the reader to \cite{HDCovEst}.

The paper is organized as follows.
Section~\ref{sec:theo} is devoted to the theoretical results of the paper.
The assumptions under which the positions of the non null and null entries of $\mathcal{B}$ can be recovered are established in 
Theorem~\ref{th1} when $\Sigma$ is known and in Theorem~\ref{th2} when $\Sigma$ is unknown.
Section~\ref{subsec:AR1} studies the specific case of the AR(1) model.
We present in Section \ref{sec:num_exp} some numerical experiments in order to support our theoretical results. 
The proofs of our main theoretical results are given in Section~\ref{sec:proofs}.

\section{Theoretical results}\label{sec:theo}

\subsection{Case where $\Sigma$ is known}

Let us first introduce some notations. Let

\begin{equation} \label{eq:C:J}
    C=\frac{1}{nq}\mathcal{X}'\mathcal{X} \textrm{\, and \,}
    J=\{1 \leq j \leq pq, \mathcal{B}_j \neq 0\}, 
\end{equation}
where $\mathcal{X}$ is defined in (\ref{eq:def:Y_X_E})  and where $\mathcal{B}_j$ denotes the $j$th component of the vector $\mathcal{B}$ defined in \eqref{eq:def:Y_X_E}. 

Let also define
\begin{equation} \label{eq:CJJ}
    C_{J,J} = \frac{1}{nq} (\mathcal{X}_{\bullet,J})'\mathcal{X}_{\bullet,J} \textrm{\, and \,} C_{J^c,J} = \frac{1}{nq} (\mathcal{X}_{\bullet,J^c})'\mathcal{X}_{\bullet,J},
\end{equation}
where $\mathcal{X}_{\bullet,J}$ and $\mathcal{X}_{\bullet,J^c}$ denote the columns of $\mathcal{X}$ belonging to the set $J$ defined in (\ref{eq:C:J}) and to its complement $J^c$, respectively.

More generally, for any matrix $A$, $A_{I,J}$ denotes the partitioned matrix extracted from $A$ by considering the rows of $A$ belonging to the set $I$ and 
the columns of $A$ belonging to the set $J$, with $\bullet$ indicating all the rows or all the columns.

The following theorem gives some conditions under which the estimator $\widehat{\mathcal{B}}$ defined in (\ref{eq:lasso}) is sign-consistent as defined by \cite{zhao:yu:2006}, namely,
$$
\PP\left(\sign(\widehat{\mathcal{B}}) = \sign(\mathcal{B})\right)\to 1, \textrm{\, as \,} n\to\infty,
$$
where the $\sign$ function maps positive entries to 1, negative entries to -1 and zero to 0.

\begin{theo}\label{th1}
Assume that $\mathcal{Y}=(\mathcal{Y}_1,\mathcal{Y}_2,\ldots,\mathcal{Y}_{nq})'$ satisfies Model \eqref{eq:model_vec}. Assume also that
there exist some positive constants $M_1$, $M_2$, $M_3$ and positive numbers $c_1$, $c_2$ such that $0<c_1 +c_2<1/2$ satisfying:
\begin{enumerate}[label=\textbf{\rm (A\arabic*)}]
\item \label{th1(i)}  for all $n\geq 1$, for all $j\in\{1,\ldots,pq\}$, $\frac{1}{n} (\mathcal{X}_{\bullet,j})'\mathcal{X}_{\bullet,j} \leq  M_1$, where $\mathcal{X}_{\bullet,j}$ is the $j$th column of $\mathcal{X}$ defined in (\ref{eq:def:Y_X_E}),
\item \label{th1(ii)}  for all $n\geq 1$, $\frac{1}{n}\lambdaMin\left((\mathcal{X}'\mathcal{X})_{J,J}\right)\geq M_2$, where $\lambdaMin(A)$ denotes the smallest eigenvalue of $A$,
\item \label{th1(iii)} $|J|=O(q^{c_1})$, where $J$ is defined in (\ref{eq:C:J}) and $|J|$ is the cardinality of the set $J$,
\item \label{th1(iv)} $q^{c_2} \min_{j \in J} |\mathcal{B}_j| \geq M_3$.
\end{enumerate}
Assume also that the following strong Irrepresentable Condition holds:
\begin{enumerate}[label=\textbf{\rm (IC)}]
\item \label{eq:irrep} There exists a positive constant vector $\eta$ such that
$$
\left|(\mathcal{X}'\mathcal{X})_{J^c,J}((\mathcal{X}'\mathcal{X})_{J,J})^{-1} \, \sign(\mathcal{B}_J)\right|\leq \mathbf{1}-\eta,
$$
where $\mathbf{1}$ is a $(pq-|J|)$ vector of 1 and the inequality holds element-wise.
\end{enumerate}
Then, for all $\lambda$ that satisfies
\begin{enumerate}[label=\textbf{\rm (L)}]
\item \label{assum:lambda_nq}
$\displaystyle \quad q=q_n=o\left(n^{\frac{1}{2(c_1+c_2)}}\right), \quad \frac{\lambda}{\sqrt{n}}\to\infty \textrm{\, and \,} \frac{\lambda}{n} =o\left(q^{-(c_1+c_2)}\right),\textrm{ as } n\to\infty,$
\end{enumerate}
we have
$$
\PP\left(\sign(\widehat{\mathcal{B}}(\lambda)) = \sign(\mathcal{B})\right)\to 1, \textrm{\, as \,} n\to\infty,
$$
where $\widehat{\mathcal{B}}(\lambda)$ is defined by (\ref{eq:lasso}).
\end{theo}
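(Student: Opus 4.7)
The plan is to follow the strategy of Zhao and Yu (2006) for sign-consistency of the LASSO, adapted to allow $q=q_n$ to grow with $n$. The first step is to use the KKT conditions for (\ref{eq:lasso}) to show that $\{\sign(\widehat{\mathcal{B}}(\lambda)) = \sign(\mathcal{B})\}$ is implied by the conjunction of two events involving the Gaussian vector $W = \mathcal{X}'\mathcal{E}$: event $A_n$, that $|(\mathcal{X}_{\bullet,J}'\mathcal{X}_{\bullet,J})^{-1}(W_J - (\lambda/2)\sign(\mathcal{B}_J))|$ is strictly less than $|\mathcal{B}_J|$ componentwise (so the signs on $J$ are preserved), and event $B_n$, that $|\mathcal{X}_{\bullet,J^c}'\mathcal{X}_{\bullet,J}(\mathcal{X}_{\bullet,J}'\mathcal{X}_{\bullet,J})^{-1}(W_J - (\lambda/2)\sign(\mathcal{B}_J)) - W_{J^c}|$ is at most $\lambda/2$ componentwise (so no spurious variable enters). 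Sign-consistency thus reduces to proving $\PP(A_n^c)+\PP(B_n^c)\to 0$.

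The key structural point is that the whitening step has already standardized the noise: since the rows of $E$ are i.i.d.\ $\mathcal{N}(0,\Sigma)$, the vector $\mathcal{E} = \mathrm{vec}(E\Sigma^{-1/2})$ is a standard Gaussian in $\mathbb{R}^{nq}$, so $W$ is centered Gaussian with covariance $\mathcal{X}'\mathcal{X}$. By (A1), each coordinate of $W$ has variance at most $nM_1$; by (A2), the operator norm of $(\mathcal{X}_{\bullet,J}'\mathcal{X}_{\bullet,J})^{-1}$ is at most $1/(nM_2)$. To bound $\PP(A_n^c)$ I would use (A4) together with the condition $\lambda/n = o(q^{-(c_1+c_2)})$ in (L) to argue that the deterministic shift is negligible compared with $\min_{j\in J}|\mathcal{B}_j| \geq M_3 q^{-c_2}$, and then apply a Gaussian tail bound to each coordinate of $(\mathcal{X}_{\bullet,J}'\mathcal{X}_{\bullet,J})^{-1}W_J$ (whose variance is of order $1/n$) together with a union bound over the $|J| = O(q^{c_1})$ coordinates. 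For $\PP(B_n^c)$ the strong Irrepresentable Condition (IC) bounds the deterministic part of each coordinate by $(\lambda/2)(1-\eta_j)$, so the Gaussian noise --- a linear combination of $\mathcal{E}$ whose coordinate variance is at most $nM_1$ by (A1) --- only needs to be smaller than $(\lambda/2)\eta_j$; the condition $\lambda/\sqrt{n}\to\infty$ in (L) is precisely what makes the Gaussian tail combined with a union bound over the $pq - |J|$ inactive coordinates vanish.

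The main obstacle is the bookkeeping of rates. One must reconcile the signal lower bound $M_3 q^{-c_2}$ with the support size $|J| = O(q^{c_1})$, the bias induced by $\lambda$, and the Gaussian fluctuations, while keeping in mind that the normalization $C = \mathcal{X}'\mathcal{X}/(nq)$ used in (IC) differs from the $1/n$ normalization used in (A1)--(A2), so each inequality has to track the appropriate powers of $q$. The upper bound on $\lambda$ in (L) keeps the bias in $A_n$ negligible, the lower bound dominates the noise in $B_n$, and their compatibility forces $q = o(n^{1/(2(c_1+c_2))})$; the restriction $c_1 + c_2 < 1/2$ is exactly what makes such a $\lambda$ exist.
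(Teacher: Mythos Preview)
Your proposal is correct and follows essentially the same route as the paper. The paper packages your KKT reduction as Proposition~\ref{prop1} (events $A_n$ and $B_n$ in the normalization $W=\mathcal{X}'\mathcal{E}/\sqrt{nq}$, $C=\mathcal{X}'\mathcal{X}/(nq)$), then bounds $\PP(A_n^c)$ exactly as you describe---(A2) controls $\|(C_{J,J})^{-1}\|_2$, (A4) gives the signal floor $M_3 q^{-c_2}$, the last condition in (L) kills the $\lambda$-bias, and a Gaussian (Chernoff) tail finishes---and bounds $\PP(B_n^c)$ via (IC) for the deterministic part and (A1) together with the projection identity $\mathrm{Cov}(\zeta)=\frac{1}{nq}\mathcal{X}_{\bullet,J^c}'(I-\Pi_{\mathrm{Im}(\mathcal{X}_{\bullet,J})})\mathcal{X}_{\bullet,J^c}$ for the variance, concluding with $\lambda/\sqrt{n}\to\infty$.
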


\begin{remark}
Observe that if $c_1+c_2<(2k)^{-1}$, for some positive $k$, then the first condition of \ref{assum:lambda_nq} becomes $q=o(n^k)$. Hence for large values of $k$, the size $q$ of $\Sigma$ 
is much larger than $n$.
\end{remark}

The proof of Theorem~\ref{th1} is given in Section \ref{sec:proofs}.
It is based on Proposition~\ref{prop1} which is an adaptation to the multivariate case of Proposition 1 in \cite{zhao:yu:2006}.

\begin{prop}\label{prop1}
Let $\widehat{\mathcal{B}}(\lambda)$ be defined by (\ref{eq:lasso}). Then
$$
\PP\left(\sign(\widehat{\mathcal{B}}(\lambda))=\sign(\mathcal{B})\right)\geq\PP(A_n\cap B_n),
$$
where
\begin{equation}\label{eq:An}
A_n=\left\{\left|(C_{J,J})^{-1}W_J\right|<\sqrt{nq}\left(|\mathcal{B}_J|-\frac{\lambda}{2nq}|(C_{J,J})^{-1}\sign(\mathcal{B}_J)|\right)\right\}
\end{equation}
and
\begin{equation}\label{eq:Bn}
B_n=\left\{\left|C_{J^c,J}(C_{J,J})^{-1}W_J-W_{J^c}\right|\leq \frac{\lambda}{2\sqrt{nq}}\left(\mathbf{1}-\left|C_{J^c,J}(C_{J,J})^{-1}\sign(\mathcal{B}_J)\right|\right)
\right\},
\end{equation}
with $
W=\mathcal{X}'\mathcal{E}/\sqrt{nq}. 
$
In (\ref{eq:An}) and (\ref{eq:Bn}), $C_{J,J}$ and $C_{J^c,J}$ are defined in (\ref{eq:CJJ}) and $W_J$ and $W_{J^c}$ denote the components of $W$ being in $J$ and $J^c$, respectively.
Note that the previous inequalities hold element-wise.
\end{prop}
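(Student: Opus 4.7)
The plan is to mimic the classical Zhao--Yu (2006) sign-consistency argument, viewing (\ref{eq:model_vec}) as an ordinary LASSO problem in the variables $(\mathcal{Y},\mathcal{X},\mathcal{B},\mathcal{E})$ of effective sample size $nq$. The central idea is to construct an ``oracle'' candidate $\widetilde{\mathcal{B}}$ with support exactly $J$ and the correct signs, and then show that on $A_n \cap B_n$ this candidate satisfies the Karush--Kuhn--Tucker (KKT) conditions of (\ref{eq:lasso}), so it is a LASSO solution with the desired sign pattern.

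First, I would record the KKT characterisation of (\ref{eq:lasso}): $\widehat{\mathcal{B}}$ minimises the objective if and only if there exists a subgradient $\widehat{s}$ of $\|\cdot\|_1$ at $\widehat{\mathcal{B}}$ satisfying $2\mathcal{X}'(\mathcal{X}\widehat{\mathcal{B}} - \mathcal{Y}) + \lambda \widehat{s} = 0$, with $\widehat{s}_j = \sign(\widehat{\mathcal{B}}_j)$ on the support and $|\widehat{s}_j| \leq 1$ otherwise. Next, I set $\widetilde{\mathcal{B}}_{J^c} = 0$ and define $\widetilde{\mathcal{B}}_J$ as the unique solution of the restricted stationary equation $2\mathcal{X}_{\bullet,J}'(\mathcal{X}_{\bullet,J}\widetilde{\mathcal{B}}_J - \mathcal{Y}) + \lambda\,\sign(\mathcal{B}_J) = 0$. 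Substituting $\mathcal{Y} = \mathcal{X}_{\bullet,J}\mathcal{B}_J + \mathcal{E}$ and using the definitions of $C_{J,J}$ and $W_J$ gives the closed-form expression
$$
\widetilde{\mathcal{B}}_J - \mathcal{B}_J \;=\; \frac{1}{\sqrt{nq}}(C_{J,J})^{-1} W_J \;-\; \frac{\lambda}{2nq}(C_{J,J})^{-1}\sign(\mathcal{B}_J).
$$

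With this representation in hand, the sign condition $\sign(\widetilde{\mathcal{B}}_J) = \sign(\mathcal{B}_J)$ reduces to an element-wise magnitude comparison between $\widetilde{\mathcal{B}}_J - \mathcal{B}_J$ and $\mathcal{B}_J$: a triangle-inequality argument shows that the event $A_n$ is sufficient for this sign-match on every coordinate of $J$. The condition $B_n$ then arises from the KKT subgradient inequality on $J^c$. Plugging the closed form for $\widetilde{\mathcal{B}}_J - \mathcal{B}_J$ into $\frac{2}{\lambda}\mathcal{X}_{\bullet,J^c}'(\mathcal{Y} - \mathcal{X}_{\bullet,J}\widetilde{\mathcal{B}}_J)$ and simplifying leads to the decomposition $C_{J^c,J}(C_{J,J})^{-1}\sign(\mathcal{B}_J) + \frac{2\sqrt{nq}}{\lambda}\bigl(W_{J^c} - C_{J^c,J}(C_{J,J})^{-1}W_J\bigr)$, whose absolute value is bounded component-wise by $\mathbf{1}$ precisely under the bound defining $B_n$, again by the triangle inequality.

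Once both events hold simultaneously, $\widetilde{\mathcal{B}}$ together with its induced subgradient satisfies the full KKT system of (\ref{eq:lasso}), so by convexity $\widetilde{\mathcal{B}}$ is a minimiser with $\sign(\widetilde{\mathcal{B}}) = \sign(\mathcal{B})$. The main obstacle I anticipate is the careful treatment of strict versus non-strict inequalities: the strict $<$ in $A_n$ is what forces the active coordinates of $\widetilde{\mathcal{B}}$ to be genuinely non-zero with the correct sign, while the $\leq$ in $B_n$ keeps the inactive coordinates inside the $\ell_1$ subdifferential. A second technical point is that $\mathcal{X}'\mathcal{X}$ is typically rank-deficient (since $pq$ is much larger than $nq$), so LASSO minimisers need not be unique; one must therefore argue that \emph{some} LASSO solution has the prescribed sign, which is exactly what the KKT verification of $\widetilde{\mathcal{B}}$ delivers, yielding $\PP(\sign(\widehat{\mathcal{B}}(\lambda)) = \sign(\mathcal{B})) \geq \PP(A_n \cap B_n)$.
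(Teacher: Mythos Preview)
Your proposal is correct and follows essentially the same approach as the paper: both construct the oracle candidate $\widetilde{\mathcal{B}}_J - \mathcal{B}_J = \frac{1}{\sqrt{nq}}(C_{J,J})^{-1}W_J - \frac{\lambda}{2nq}(C_{J,J})^{-1}\sign(\mathcal{B}_J)$ (the paper writes it as $\widehat{u}_J$) and verify that $A_n$ forces the correct signs on $J$ while $B_n$ ensures the KKT subgradient inequality on $J^c$. Your treatment of the strict/non-strict inequalities and the possible non-uniqueness of the LASSO minimiser is in fact slightly more explicit than the paper's, but the argument is the same Zhao--Yu primal-witness construction.
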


The proof of Proposition \ref{prop1} is given in Section \ref{sec:proofs}.

We give in the following proposition which is proved in Section \ref{sec:proofs} some conditions on $X$ and $\Sigma$ under which Assumptions~\ref{th1(i)}~and~\ref{th1(ii)} of Theorem~\ref{th1} hold.

\begin{prop}\label{propcond}
If there exist some positive constants $M'_1$, $M'_2$, $m_1$, $m_2$ such that,  for all $n\geq 1$,
\begin{enumerate}[label=\textbf{\rm (C\arabic*)}]
\item \label{cond1} for all $j\in\{1,\ldots,p\}$, $\frac{1}{n} (X_{\bullet,j})'X_{\bullet,j}\leq M'_1$,
\item \label{cond2} $\frac{1}{n}\lambdaMin(X'X) \geq M'_2$,
\item \label{cond3} $\lambdaMax(\Sigma^{-1}) \leq m_1$,
\item \label{cond4} $\lambdaMin(\Sigma^{-1}) \geq m_2$, 
\end{enumerate}
then Assumptions~\ref{th1(i)}~and~\ref{th1(ii)} of Theorem~\ref{th1} are satisfied. 
\end{prop}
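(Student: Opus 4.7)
The plan is to exploit the Kronecker structure of $\mathcal{X}'\mathcal{X}$. Using the identity $(A\otimes B)(C\otimes D) = AC\otimes BD$ together with the defining property $\Sigma^{-1/2}(\Sigma^{-1/2})' = \Sigma^{-1}$, I first compute
\[
\mathcal{X}'\mathcal{X} = \bigl(\Sigma^{-1/2}\otimes X'\bigr)\bigl((\Sigma^{-1/2})'\otimes X\bigr) = \Sigma^{-1}\otimes X'X.
\]
This single identity reduces both assumptions to elementary facts about Kronecker products, eigenvalues, and principal submatrices of positive semidefinite matrices.

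For \ref{th1(i)}, note that $(\mathcal{X}_{\bullet,j})'\mathcal{X}_{\bullet,j}$ is a diagonal entry of $\mathcal{X}'\mathcal{X}$, and such a diagonal entry factors as $(\Sigma^{-1})_{k,k}(X'X)_{\ell,\ell}$ for some $k\in\{1,\ldots,q\}$ and $\ell\in\{1,\ldots,p\}$. Since the diagonal entries of a positive semidefinite matrix are majorized by its largest eigenvalue, $(\Sigma^{-1})_{k,k}\leq \lambdaMax(\Sigma^{-1})\leq m_1$ by \ref{cond3}, while $(X'X)_{\ell,\ell}=(X_{\bullet,\ell})'X_{\bullet,\ell}\leq nM'_1$ by \ref{cond1}. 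Setting $M_1 = m_1 M'_1$ gives \ref{th1(i)}.

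For \ref{th1(ii)}, I use the min-max characterization of eigenvalues: for any principal submatrix $A_{J,J}$ of a symmetric matrix $A$,
\[
\lambdaMin(A_{J,J}) = \min_{\substack{x\neq 0 \\ \mathrm{supp}(x)\subseteq J}} \frac{x'Ax}{x'x} \geq \lambdaMin(A).
\]
Applied to $A=\mathcal{X}'\mathcal{X}=\Sigma^{-1}\otimes X'X$, combined with the standard identity $\lambdaMin(A\otimes B)=\lambdaMin(A)\lambdaMin(B)$ for symmetric positive semidefinite $A,B$, this yields
\[
\lambdaMin\bigl((\mathcal{X}'\mathcal{X})_{J,J}\bigr) \geq \lambdaMin(\Sigma^{-1})\,\lambdaMin(X'X) \geq m_2 \cdot n M'_2,
\]
using \ref{cond4} and \ref{cond2}. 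Taking $M_2 = m_2 M'_2$ concludes the proof.

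No step here is genuinely delicate; the proof is a direct application of Kronecker-product linear algebra. The only subtlety worth flagging is that \ref{th1(ii)} concerns the smallest eigenvalue of a principal submatrix of $\mathcal{X}'\mathcal{X}$ rather than of the full Gram matrix itself, which is exactly what the min-max/interlacing observation above is designed to handle.
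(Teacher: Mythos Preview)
Your proof is correct and follows essentially the same approach as the paper: both establish $\mathcal{X}'\mathcal{X}=\Sigma^{-1}\otimes X'X$, read off the diagonal entries as products $(\Sigma^{-1})_{k,k}(X'X)_{\ell,\ell}$ for \ref{th1(i)}, and use the interlacing inequality for principal submatrices (the paper cites it as Theorem~4.3.15 of Horn--Johnson) together with the Kronecker eigenvalue identity for \ref{th1(ii)}. The only cosmetic difference is that the paper tracks the indexing via the Euclidean division $j-1=pk_j+r_j$ explicitly, whereas you state the diagonal-entry factorization abstractly.
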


\begin{remark}
Observe that \ref{cond1} and \ref{cond2} hold in the case where the columns of the matrix $X$ are orthogonal.
\end{remark}

We give in Proposition \ref{prop_IC} in Section \ref{subsec:AR1} some conditions under which Condition \ref{eq:irrep} holds in the specific case where $\Sigma$ is the covariance
matrix of an AR(1) process.

\subsection{Case where $\Sigma$ is unknown}

Similarly as in (\ref{eq:C:J}) and (\ref{eq:CJJ}), we introduce the following notations:
\begin{equation}\label{eq:C_tilde}
\widetilde{C}=\frac{1}{nq}\widetilde{\mathcal{X}}'\widetilde{\mathcal{X}}
\end{equation}
and
\begin{equation} \label{eq:CJJ_tilde}
    \widetilde{C}_{J,J} = \frac{1}{nq} (\widetilde{\mathcal{X}}_{\bullet,J})'\widetilde{\mathcal{X}}_{\bullet,J} \textrm{\, and \,} \widetilde{C}_{J^c,J} = \frac{1}{nq} (\widetilde{\mathcal{X}}_{\bullet,J^c})'\widetilde{\mathcal{X}}_{\bullet,J},
\end{equation}
where $\widetilde{\mathcal{X}}_{\bullet,J}$ and $\widetilde{\mathcal{X}}_{\bullet,J^c}$ denote the columns of $\widetilde{\mathcal{X}}$ belonging to the set $J$ defined in \eqref{eq:C:J} and to its complement $J^c$, respectively.

A straightforward extension of Proposition \ref{prop1} leads to the following proposition for Model~(\ref{eq:model_vec_tilde}).

\begin{prop} \label{prop2}
Let $\widetilde{\mathcal{B}}(\lambda)$ be defined by (\ref{eq:lasso_tilde}). Then
$$
\PP\left(\sign(\widetilde{\mathcal{B}}(\lambda))=\sign(\mathcal{B})\right)\geq\PP(\widetilde{A_n}\cap \widetilde{B_n}),
$$
where
\begin{equation}\label{eq:An_tilde}
\widetilde{A_n}=\left\{\left|(\widetilde{C}_{J,J})^{-1}\widetilde{W}_J\right|<\sqrt{nq}\left(|\mathcal{B}_J|-\frac{\lambda}{2nq}|(\widetilde{C}_{J,J})^{-1}\sign(\mathcal{B}_J)|\right)\right\}
\end{equation}
and
\begin{equation}\label{eq:Bn_tilde}
\widetilde{B_n}=\left\{\left|\widetilde{C}_{J^c,J}(\widetilde{C}_{J,J})^{-1}\widetilde{W}_J-\widetilde{W}_{J^c}\right|\leq \frac{\lambda}{2\sqrt{nq}}\left(\mathbf{1}-\left|\widetilde{C}_{J^c,J}(\widetilde{C}_{J,J})^{-1}\sign(\mathcal{B}_J)\right|\right)
\right\},
\end{equation}
with $
W=\widetilde{\mathcal{X}}'\widetilde{\mathcal{E}}/\sqrt{nq}. 
$
In (\ref{eq:An_tilde}) and (\ref{eq:Bn_tilde}), $\widetilde{C}_{J,J}$ and $\widetilde{C}_{J^c,J}$ are defined in (\ref{eq:CJJ_tilde}) and $\widetilde{W}_J$ and $\widetilde{W}_{J^c}$ denote the components of $\widetilde{W}$ being in $J$ and $J^c$, respectively.
Note that the previous inequalities hold element-wise.
\end{prop}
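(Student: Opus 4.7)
The plan is to observe that the proof of Proposition \ref{prop1} is purely deterministic: it reduces to a Karush--Kuhn--Tucker (KKT) analysis of the Lasso criterion built on the linear identity $\mathcal{Y} = \mathcal{X}\mathcal{B} + \mathcal{E}$, and nowhere uses distributional assumptions on $\mathcal{E}$ or any structural assumption on $\mathcal{X}$ beyond invertibility of the Gram submatrix on $J$. Since Model \eqref{eq:model_vec_tilde} has exactly the same form as Model \eqref{eq:model_vec}, with $(\widetilde{\mathcal{Y}}, \widetilde{\mathcal{X}}, \widetilde{\mathcal{E}})$ in place of $(\mathcal{Y}, \mathcal{X}, \mathcal{E})$, the proof of Proposition \ref{prop2} is obtained by repeating the argument of Proposition \ref{prop1} verbatim, systematically replacing each unaccented quantity by its tilde-accented counterpart.

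Concretely, I would carry out the following steps. First, write the subgradient optimality (KKT) conditions for the minimizer $\widetilde{\mathcal{B}}(\lambda)$ of \eqref{eq:lasso_tilde}: on the event that $\sign(\widetilde{\mathcal{B}}(\lambda)) = \sign(\mathcal{B})$ one has $\widetilde{\mathcal{B}}_{J^c}(\lambda) = 0$ together with the equality $\widetilde{\mathcal{X}}_{\bullet,J}'(\widetilde{\mathcal{Y}} - \widetilde{\mathcal{X}}_{\bullet,J}\widetilde{\mathcal{B}}_J(\lambda)) = (\lambda/2)\sign(\mathcal{B}_J)$ and the coordinate-wise inequality $|\widetilde{\mathcal{X}}_{\bullet,J^c}'(\widetilde{\mathcal{Y}} - \widetilde{\mathcal{X}}_{\bullet,J}\widetilde{\mathcal{B}}_J(\lambda))| \leq (\lambda/2)\mathbf{1}$. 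Second, substitute $\widetilde{\mathcal{Y}} = \widetilde{\mathcal{X}}_{\bullet,J}\mathcal{B}_J + \widetilde{\mathcal{E}}$ (using $\mathcal{B}_{J^c}=0$) and solve the $J$-block equation for the increment $u := \widetilde{\mathcal{B}}_J(\lambda) - \mathcal{B}_J$, obtaining
\[
u = -\frac{\lambda}{2nq}(\widetilde{C}_{J,J})^{-1}\sign(\mathcal{B}_J) + \frac{1}{\sqrt{nq}}(\widetilde{C}_{J,J})^{-1}\widetilde{W}_J.
\]
Third, verify by the triangle inequality that on $\widetilde{A_n}$ one has $|u_j| < |\mathcal{B}_j|$ coordinate-wise, which is the (strict) sufficient condition for $\sign(\mathcal{B}_J + u) = \sign(\mathcal{B}_J)$. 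Fourth, inject this $u$ into the $J^c$-inequality: after expanding and rearranging, the resulting bound is exactly the event $\widetilde{B_n}$. Intersecting the two events produces a candidate that satisfies the full KKT system with the correct sign pattern, which by convexity must be the Lasso solution, so $\PP(\sign(\widetilde{\mathcal{B}}(\lambda))=\sign(\mathcal{B})) \geq \PP(\widetilde{A_n} \cap \widetilde{B_n})$.

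There is no genuine obstacle at the level of Proposition \ref{prop2} itself: the KKT-and-triangle-inequality bookkeeping is insensitive to the fact that $\widetilde{\mathcal{E}}$ is no longer Gaussian and is now stochastically coupled to $\widetilde{\mathcal{X}}$ through $\widehat{\Sigma}$, because both events $\widetilde{A_n}$ and $\widetilde{B_n}$ are defined directly in terms of the observed random quantities without appeal to their joint distribution. The genuine difficulty induced by replacing $\Sigma$ by $\widehat{\Sigma}$---namely, controlling the probability of $\widetilde{A_n} \cap \widetilde{B_n}$ in spite of this loss of Gaussianity and independence---is deferred to the proof of Theorem \ref{th2}, where it will be handled by combining the corresponding bounds for the known-$\Sigma$ case with quantitative convergence rates of $\widehat{\Sigma}$ and $\widehat{\Sigma}^{-1}$.
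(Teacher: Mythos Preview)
Your proposal is correct and aligns exactly with the paper's own treatment: the paper does not give a separate proof of Proposition~\ref{prop2} but simply states that it is ``a straightforward extension of Proposition~\ref{prop1}'' to Model~\eqref{eq:model_vec_tilde}. Your outline reproduces the deterministic KKT argument of Proposition~\ref{prop1} with the tilde substitutions, and your remark that the distributional complications are deferred to Theorem~\ref{th2} is precisely how the paper proceeds.
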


The following theorem extends Theorem \ref{th1} to the case where $\Sigma$ is unknown and gives some conditions under which the estimator $\widetilde{\mathcal{B}}$ defined in
(\ref{eq:lasso_tilde}) is sign-consistent. The proof of Theorem \ref{th2} 
is given in Section \ref{sec:proofs} and is based on Proposition \ref{prop2}.

\begin{theo}\label{th2}
Assume that Assumptions \ref{th1(i)}, \ref{th1(ii)}, \ref{th1(iii)}, \ref{th1(iv)}, \ref{eq:irrep} and \ref{assum:lambda_nq} of Theorem~\ref{th1} hold. Assume also that, there exist some positive
constants $M_4$, $M_5$, $M_6$ and $M_7$, such that for all $n\geq 1$,
\begin{enumerate}[label=\textbf{\rm (A\arabic*)}] \setcounter{enumi}{4}
\item \label{th2(v)}  $\|(X'X)/n\|_\infty\leq M_4$, 
\item \label{th2(vi)}  $\lambdaMin((X'X)/n)\geq M_5$, 
\item \label{th2(vii)} $\lambdaMax(\Sigma^{-1}) \leq M_6$, 
\item \label{th2(viii)} $\lambdaMin(\Sigma^{-1}) \geq M_7$.
\end{enumerate}
Suppose also that
\begin{enumerate}[label=\textbf{\rm (A\arabic*)}] \setcounter{enumi}{8}
\item \label{th2(ix)} $\|\Sigma^{-1}-\widehat{\Sigma}^{-1}\|_{\infty}=O_P((nq)^{-1/2})$, as $n$ tends to infinity,
\item \label{th2(x)} $\rho(\Sigma-\widehat{\Sigma})=O_P((nq)^{-1/2})$, as $n$ tends to infinity.
\end{enumerate}
Let $\widetilde{\mathcal{B}}(\lambda)$ be defined by (\ref{eq:lasso_tilde}), then
$$
\PP\left(\textrm{sign}(\widetilde{\mathcal{B}}(\lambda))=\textrm{sign}(\mathcal{B})\right)\to 1,\textrm{ as } n\to\infty.
$$
In the previous assumptions, $\lambdaMax(A)$, $\lambdaMin(A)$, $\rho(A)$ and $\|A\|_\infty$ denote the largest eigenvalue, the smallest
eigenvalue, the spectral radius  and the infinite norm (induced by the associated vector norm) of the matrix $A$.
\end{theo}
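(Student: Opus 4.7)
The plan is to build on Proposition~\ref{prop2}, which reduces sign-consistency of $\widetilde{\mathcal{B}}$ to showing $\PP(\widetilde{A_n}\cap\widetilde{B_n})\to 1$, and then to transfer the oracle argument behind Theorem~\ref{th1} through a perturbation analysis that quantifies the cost of replacing $\Sigma^{-1}$ by $\widehat{\Sigma}^{-1}$. First I would exploit the Kronecker structure of $\mathcal{X}$ and $\widetilde{\mathcal{X}}$: since $\widetilde{\mathcal{X}}'\widetilde{\mathcal{X}}=\widehat{\Sigma}^{-1}\otimes X'X$ and $\mathcal{X}'\mathcal{X}=\Sigma^{-1}\otimes X'X$, one gets
\[
\widetilde{C}-C=\frac{1}{nq}\bigl(\widehat{\Sigma}^{-1}-\Sigma^{-1}\bigr)\otimes X'X,
\]
and a direct computation yields $\widetilde{\mathcal{X}}'\widetilde{\mathcal{E}}=vec(X'E\widehat{\Sigma}^{-1})$ together with $\mathcal{X}'\mathcal{E}=vec(X'E\Sigma^{-1})$, so
\[
\widetilde{W}-W=\frac{1}{\sqrt{nq}}\,vec\bigl(X'E(\widehat{\Sigma}^{-1}-\Sigma^{-1})\bigr).
\]
Combining \ref{th2(v)} with \ref{th2(ix)} and the multiplicativity of $\|\cdot\|_\infty$ on Kronecker products would give a small bound on $\|\widetilde{C}-C\|_\infty$, while each entry of $\widetilde{W}-W$ would be controlled through a Gaussian tail bound applied to $X'E$, using the spectral bounds \ref{th2(vii)}--\ref{th2(viii)} on $\Sigma$.

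Next I would propagate these perturbations to the submatrices that actually appear in Proposition~\ref{prop2}. Because $C_{J,J}$ is well-conditioned by \ref{th1(ii)} and $|J|=O(q^{c_1})$ by \ref{th1(iii)}, a standard Neumann-series expansion yields $\|(\widetilde{C}_{J,J})^{-1}-(C_{J,J})^{-1}\|_\infty = o_P(1)$ at a rate comparable to $\|\widetilde{C}-C\|_\infty$, and analogously for the product $\widetilde{C}_{J^c,J}(\widetilde{C}_{J,J})^{-1}-C_{J^c,J}(C_{J,J})^{-1}$. Substituting these into \eqref{eq:An_tilde}--\eqref{eq:Bn_tilde} and comparing to \eqref{eq:An}--\eqref{eq:Bn}, the perturbation terms become dominated by the signal slack $\sqrt{nq}|\mathcal{B}_J|\gtrsim M_3\sqrt{n}\,q^{1/2-c_2}$ (from \ref{th1(iv)}) and by the irrepresentable-condition slack $(\lambda/\sqrt{nq})\,\eta$ (from \ref{eq:irrep}), under the rate regime \ref{assum:lambda_nq}.

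Combining the probability bound established in the proof of Theorem~\ref{th1} on $\PP(A_n\cap B_n)$ with the probabilistic estimates on the perturbation events then yields $\PP(\widetilde{A_n}\cap\widetilde{B_n})\to 1$, which via Proposition~\ref{prop2} gives the desired sign-consistency. The main obstacle I expect is the simultaneous control of $\|(\widetilde{C}_{J,J})^{-1}\|_\infty$ and of the entrywise fluctuations of $\widetilde{W}$ when $|J|$ grows polynomially with $q$: one must ensure that the $(nq)^{-1/2}$ rates supplied by \ref{th2(ix)}--\ref{th2(x)}, together with \ref{th2(v)}--\ref{th2(viii)}, absorb the dimensional blow-up carried by $|J|$ inside the Neumann expansion while leaving the $q^{1/2-c_2}$ signal slack dominant, which is precisely what drives the scaling imposed by \ref{assum:lambda_nq}.
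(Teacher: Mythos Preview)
Your plan is correct and mirrors the paper's proof: both reduce via Proposition~\ref{prop2}, exploit the Kronecker identities $\widetilde{C}-C=\frac{1}{nq}(\widehat{\Sigma}^{-1}-\Sigma^{-1})\otimes X'X$ and $\widetilde{W}-W=\frac{1}{\sqrt{nq}}\,vec\bigl(X'E(\widehat{\Sigma}^{-1}-\Sigma^{-1})\bigr)$, decompose the quantities in $\widetilde{A_n},\widetilde{B_n}$ into their oracle counterparts plus perturbation pieces, control those pieces through \ref{th2(v)}--\ref{th2(x)}, and fall back on the Theorem~\ref{th1} argument for the oracle events. The only differences are in bookkeeping: the paper works predominantly in the spectral norm $\|\cdot\|_2$ (using the resolvent identity $(\widetilde{C}_{J,J})^{-1}-(C_{J,J})^{-1}=(\widetilde{C}_{J,J})^{-1}(C_{J,J}-\widetilde{C}_{J,J})(C_{J,J})^{-1}$ rather than a Neumann expansion) and controls $\widetilde{W}-W$ not by entrywise Gaussian tails on $X'E$ but by writing $vec(E)=((\Sigma^{1/2})'\otimes\textrm{Id})Z$ for a standard Gaussian $Z$ and bounding $|(DAZ)_j|\le\|D\|_2\|A\|_2\|Z\|_2$ followed by Markov on $\|Z\|_2^2$.
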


\begin{remark}
Observe that Assumptions \ref{th2(v)} and  \ref{th2(vi)} hold in the case where the columns of the matrix $X$ are orthogonal.
Note also that \ref{th2(vii)} and \ref{th2(viii)} are the same as \ref{cond3} and \ref{cond4} in Proposition \ref{propcond}.
\end{remark}

In order to estimate $\Sigma$, we propose the following strategy: 
\begin{itemize}
\item Fitting a classical linear model to each column of the matrix 
  $Y$ in order to have access to an
  estimation $\widehat{E}$ of the random error matrix
  $E$. It is possible since $p$ is assumed to be fixed and smaller than $n$. 
\item Estimating $\Sigma$ from $\widehat{E}$ by assuming that $\Sigma$ has a particular structure, Toeplitz for instance.
\end{itemize}

More precisely, $\widehat{E}$ defined in the first step is such that:
\begin{equation}\label{eq: Ehat}
\widehat{E}=\left(\textrm{Id}_{\rset^n}-X(X'X)^{-1}X'\right) E=:\Pi E,
\end{equation}
which implies that
\begin{equation}\label{eq:Ehat_1}
\widehat{\mathcal{E}}=vec(\widehat{E})=\left[\textrm{Id}_{\rset^q}\otimes\Pi\right]\mathcal{E},
\end{equation}
where $\mathcal{E}$ is defined in (\ref{eq:def:Y_X_E}).

We prove in Proposition \ref{prop:cond_th_AR} below that our strategy for estimating $\Sigma$ provides an estimator satisfying the assumptions of Theorem \ref{th2}
in the case where $(E_{1,t})_t$, $(E_{2,t})_t$, ..., $(E_{n,t})_t$ are assumed to be independent AR(1) processes.

\subsection{The AR(1) case}\label{subsec:AR1}

\subsubsection{Sufficient conditions for Assumption \ref{eq:irrep} of Theorem \ref{th1}}

The following proposition gives some conditions under which the strong Irrepresentable Condition \ref{eq:irrep} of Theorem \ref{th1} holds.

\begin{prop}\label{prop_IC}
Assume that $(E_{1,t})_t$, $(E_{2,t})_t$, ..., $(E_{n,t})_t$ in Model (\ref{eq:model:matriciel}) are independent AR(1) processes satisfying:
$$
E_{i,t}-\phi_1 E_{i,t-1}=Z_{i,t},\; \forall i\in\{1,\dots,n\},
$$
where the $Z_{i,t}$'s are zero-mean i.i.d. Gaussian random variables with variance $\sigma^2$ and $|\phi_1|<1$.
Assume also that $X$ defined in (\ref{eq:model:matriciel}) is such that $X'X=\nu\textrm{Id}_{\rset^p}$, where $\nu$ is a positive constant. 
Moreover, suppose that if $j\in J$, then $j>p$ and $j< pq -p$. Suppose also that for all $j$, $j-p$ or $j+p$ is not in $J$.
Then, the strong Irrepresentable Condition \ref{eq:irrep} of Theorem \ref{th1} holds.
\end{prop}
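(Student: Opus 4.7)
The plan is to exploit the Kronecker-product form of $\mathcal{X}'\mathcal{X}$, the explicit tridiagonal structure of the precision matrix of an AR(1) process, and the combinatorial assumptions on $J$ in order to reduce \ref{eq:irrep} to an elementary inequality in $\phi_1$.

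First I would compute, using $\mathcal{X}=(\Sigma^{-1/2})'\otimes X$ together with the orthogonality $X'X=\nu\,\textrm{Id}_{\rset^p}$,
$$
\mathcal{X}'\mathcal{X} \;=\; \Sigma^{-1/2}(\Sigma^{-1/2})' \otimes X'X \;=\; \nu\,\bigl(\Sigma^{-1}\otimes \textrm{Id}_{\rset^p}\bigr).
$$
With the $vec$ convention, an index $j\in\{1,\dots,pq\}$ decomposes uniquely as $j=(k-1)p+i$ with $i\in\{1,\dots,p\}$, $k\in\{1,\dots,q\}$, and $(\mathcal{X}'\mathcal{X})_{j,j'} = \nu\,(\Sigma^{-1})_{k,k'}\,\1_{i=i'}$. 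For the AR(1) process with parameters $\phi_1$ and $\sigma^2$, the matrix $\Sigma^{-1}$ is known to be tridiagonal with diagonal entries $(1+\phi_1^2)/\sigma^2$ (and $1/\sigma^2$ only at the corners $k=1,q$) and off-diagonals $-\phi_1/\sigma^2$, so that $(\Sigma^{-1})_{k,k'}=0$ as soon as $|k-k'|\geq 2$.

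Next I would read the combinatorial hypotheses on $J$ in the $p\times q$ grid indexed by $(i,k)$. The condition on each $j\in J$ says that no two elements of $J$ share a row and sit at adjacent columns; combined with the tridiagonality of $\Sigma^{-1}$, this kills every off-diagonal entry of $(\mathcal{X}'\mathcal{X})_{J,J}$. The requirement $p<j<pq-p$ for $j\in J$ excludes the corner columns $k\in\{1,q\}$, so every diagonal entry is equal to $\nu(1+\phi_1^2)/\sigma^2$ and
$$
(\mathcal{X}'\mathcal{X})_{J,J} \;=\; \frac{\nu(1+\phi_1^2)}{\sigma^2}\,\textrm{Id}_{\rset^{|J|}}, \qquad \bigl((\mathcal{X}'\mathcal{X})_{J,J}\bigr)^{-1} \;=\; \frac{\sigma^2}{\nu(1+\phi_1^2)}\,\textrm{Id}_{\rset^{|J|}}.
$$

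Finally I would bound the coordinate vector $(\mathcal{X}'\mathcal{X})_{J^c,J}\bigl((\mathcal{X}'\mathcal{X})_{J,J}\bigr)^{-1}\sign(\mathcal{B}_J)$ entrywise. For $\ell\in J^c$ associated with $(i',k')$, tridiagonality forces the only potentially nonzero entries in the corresponding row of $(\mathcal{X}'\mathcal{X})_{J^c,J}$ to come from $j\in\{\ell-p,\ell+p\}\cap J$, each contributing a factor $-\nu\phi_1/\sigma^2$ multiplied by a sign. Hence the $\ell$-th coordinate is bounded in absolute value by $2|\phi_1|/(1+\phi_1^2)$, and the elementary inequality $(1-|\phi_1|)^2>0$ (valid since $|\phi_1|<1$) rewrites as $1+\phi_1^2>2|\phi_1|$, so this bound is strictly less than $1$ uniformly in $\ell$. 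Setting $\eta=\bigl(1-2|\phi_1|/(1+\phi_1^2)\bigr)\,\mathbf{1}$ yields \ref{eq:irrep}.

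The only real obstacle, beyond these essentially routine Kronecker and AR(1) computations, is the combinatorial step: correctly reading the abstract conditions on $J$ as the statement that $J$ is row-sparse and away from the boundary in the $p\times q$ grid, so that $(\mathcal{X}'\mathcal{X})_{J,J}$ reduces to a scalar multiple of the identity; once this is done, the remainder amounts to reading off a handful of entries of $\Sigma^{-1}$.
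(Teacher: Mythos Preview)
Your argument has a genuine gap in the combinatorial step: you over-read the hypothesis on $J$. The assumption ``for all $j$, $j-p$ or $j+p$ is not in $J$'' means that for every index $j$ \emph{at least one} of $j\pm p$ lies outside $J$; it does \emph{not} say that for $j\in J$ both neighbours $j\pm p$ lie outside $J$. In particular, a pair $\{j_0,j_0+p\}\subseteq J$ is perfectly compatible with the hypothesis (take e.g.\ $p=2$, $q=5$, $J=\{5,7\}$ and check all the conditions). For such a $J$ the block $(\mathcal{X}'\mathcal{X})_{J,J}$ carries the off-diagonal entry $-\nu\phi_1/\sigma^2$ at position $(j_0,j_0+p)$, so your claimed identity $(\mathcal{X}'\mathcal{X})_{J,J}=\dfrac{\nu(1+\phi_1^2)}{\sigma^2}\,\textrm{Id}$ fails, and with it the explicit inversion on which the rest of your bound relies.

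What the hypothesis actually buys is that each row of $S_{J,J}$ (with $S=\mathcal{X}'\mathcal{X}$) contains \emph{at most one} nonzero off-diagonal entry, of size $\nu|\phi_1|/\sigma^2$, and likewise each row of $S_{J^c,J}$ contains at most one nonzero entry. The paper exploits this by observing that $S_{J,J}$ is strictly diagonally dominant and invoking Varah's bound $\|A^{-1}\|_\infty\le 1/\min_k\bigl(|a_{kk}|-\sum_{j\ne k}|a_{kj}|\bigr)$ to get $\|(S_{J,J})^{-1}\|_\infty\le \sigma^2/\bigl(\nu(1+\phi_1^2-|\phi_1|)\bigr)$, then combines this with $\|S_{J^c,J}\|_\infty=\nu|\phi_1|/\sigma^2$ to obtain $\|S_{J^c,J}(S_{J,J})^{-1}\|_\infty\le |\phi_1|/(1+\phi_1^2-|\phi_1|)<1$. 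Your Kronecker and tridiagonal computations are fine; the fix is to replace the incorrect diagonalisation of $S_{J,J}$ by this diagonal-dominance argument.
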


The proof of Proposition \ref{prop_IC} is given in Section \ref{sec:proofs}.

\subsubsection{Sufficient conditions for Assumptions \ref{th2(vii)}, \ref{th2(viii)}, \ref{th2(ix)} and \ref{th2(x)} of Theorem \ref{th2}}

The following proposition establishes that in the particular case where the  $(E_{1,t})_t$, $(E_{2,t})_t$, ..., $(E_{n,t})_t$ are independent AR(1) processes, our strategy for estimating $\Sigma$ 
provides an estimator satisfying the assumptions of Theorem \ref{th2}.

\begin{prop}\label{prop:cond_th_AR}
Assume that $(E_{1,t})_t$, $(E_{2,t})_t$, ..., $(E_{n,t})_t$ in Model (\ref{eq:model:matriciel}) are independent AR(1) processes satisfying:
$$
E_{i,t}-\phi_1 E_{i,t-1}=Z_{i,t},\; \forall i\in\{1,\dots,n\},
$$
where the $Z_{i,t}$'s are zero-mean i.i.d. Gaussian random variables with variance $\sigma^2$ and $|\phi_1|<1$.
Let 
$$
\widehat{\Sigma}=\frac{1}{1-\widehat{\phi}_1^2}\left(
\begin{array}{cccccc}
1                & \widehat{\phi}_1 & \widehat{\phi}_1^2  & \dots & \widehat{\phi}_1^{q-1}\\
\widehat{\phi}_1 &  1               & \widehat{\phi}_1   & \dots &  \widehat{\phi}_1^{q-2}\\
\vdots           & \ddots            & \ddots                       & \ddots & \vdots                 \\  
\vdots           & \ddots            & \ddots                        & \ddots & \vdots                 \\
 \widehat{\phi}_1^{q-1} &  \dots       & \dots                      & \dots       &    1           \\
\end{array}
\right),
$$
where
\begin{equation}\label{eq:phi_1_hat}
\widehat{\phi}_1=\frac{\sum_{i=1}^n\sum_{\ell=2}^q \widehat{E}_{i,\ell} \widehat{E}_{i,\ell-1}}{\sum_{i=1}^n\sum_{\ell=1}^{q-1} \widehat{E}_{i,\ell}^2},
\end{equation}
where $\widehat{E}=(\widehat{E}_{i,\ell})_{1\leq i\leq n,1\leq \ell\leq q}$ is defined in (\ref{eq: Ehat}).
Then, Assumptions \ref{th2(vii)}, \ref{th2(viii)}, \ref{th2(ix)} and \ref{th2(x)} of Theorem \ref{th2} are valid.
\end{prop}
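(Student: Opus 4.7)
The four assumptions split naturally into deterministic spectral bounds (\ref{th2(vii)}, \ref{th2(viii)}) and stochastic rates (\ref{th2(ix)}, \ref{th2(x)}) on the estimator, and all four reduce to explicit properties of the AR(1) covariance matrix together with a $\sqrt{nq}$-consistency result for $\widehat{\phi}_1$. For \ref{th2(vii)} and \ref{th2(viii)}, I would use that $\Sigma^{-1}$ is tridiagonal, with diagonal $(1+\phi_1^2)/\sigma^2$ (and $1/\sigma^2$ at the two corners) and sub/super-diagonal $-\phi_1/\sigma^2$; equivalently, $\Sigma$ is the symmetric Toeplitz matrix with symbol $f(\omega)=\sigma^2|1-\phi_1 e^{i\omega}|^{-2}$, which is uniformly bounded above and below by $\sigma^2(1\pm|\phi_1|)^{-2}$. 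The classical Toeplitz spectral bounds then give $(1-|\phi_1|)^2/\sigma^2\le\lambdaMin(\Sigma^{-1})\le\lambdaMax(\Sigma^{-1})\le(1+|\phi_1|)^2/\sigma^2$ uniformly in $q$, with explicit constants.

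The central step is to show $|\widehat{\phi}_1-\phi_1|=O_P((nq)^{-1/2})$. Writing $\widehat{\phi}_1=N_n/D_n$ and substituting $\widehat{E}_{\bullet,\ell}=\Pi E_{\bullet,\ell}$ column by column (the projector acts on rows only) yields $N_n=\sum_{\ell=2}^q E_{\bullet,\ell}'\Pi E_{\bullet,\ell-1}$ and $D_n=\sum_{\ell=1}^{q-1} E_{\bullet,\ell}'\Pi E_{\bullet,\ell}$. Row-independence of $E$ and $\mathrm{tr}(\Pi)=n-p$ produce $\PE[N_n]=(q-1)(n-p)\gamma(1)$ and $\PE[D_n]=(q-1)(n-p)\gamma(0)$, where $\gamma(\cdot)$ denotes the AR(1) autocovariances. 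The Yule--Walker identity $\gamma(1)=\phi_1\gamma(0)$ then produces the crucial cancellation $\PE[N_n-\phi_1 D_n]=0$, eliminating the deterministic bias introduced by $\Pi$. For the variance, I would apply Isserlis' theorem to $\mathrm{vec}(E)\sim\mathcal{N}(0,\Sigma\otimes I_n)$; the cross-terms collapse via $\mathrm{tr}(\Pi^2)=n-p$ and the summability $\sum_k\gamma(k)^2<\infty$ to $\Var(N_n-\phi_1 D_n)=O(nq)$, so Chebyshev yields $|N_n-\phi_1 D_n|=O_P(\sqrt{nq})$. An analogous argument gives $D_n/(nq)\to\gamma(0)>0$ in probability, and taking the ratio delivers the claimed rate.

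Given this rate, \ref{th2(ix)} is immediate: the entries of $\Sigma^{-1}$ and $\widehat{\Sigma}^{-1}$ are the same polynomials (of degree at most two) evaluated at $\phi_1$ and $\widehat{\phi}_1$ respectively, hence Lipschitz on any compact subinterval of $(-1,1)$; since $\widehat{\phi}_1$ lies in such a subinterval with probability tending to one, one obtains $\|\Sigma^{-1}-\widehat{\Sigma}^{-1}\|_\infty\le C|\widehat{\phi}_1-\phi_1|=O_P((nq)^{-1/2})$. For \ref{th2(x)} I would use $\rho(\Sigma-\widehat{\Sigma})\le\|\Sigma-\widehat{\Sigma}\|_\infty$: each row sum equals $\sum_{k=0}^{q-1}|\phi_1^k/(1-\phi_1^2)-\widehat{\phi}_1^k/(1-\widehat{\phi}_1^2)|$ up to boundary terms, and combining the telescoping identity $a^k-b^k=(a-b)\sum_{j=0}^{k-1}a^jb^{k-1-j}$ with a Lipschitz bound on $x\mapsto1/(1-x^2)$ away from $\pm1$ controls each term by $Ck\rho^{k-1}|\widehat{\phi}_1-\phi_1|$ for some $\rho<1$ (an upper bound on $\max(|\phi_1|,|\widehat{\phi}_1|)$ holding with probability tending to one). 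Summing the convergent series $\sum_k k\rho^{k-1}$ uniformly in $q$ then yields the required rate.

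The main obstacle is the exact variance computation for $N_n-\phi_1 D_n$: because $E_{\bullet,\ell}'\Pi E_{\bullet,\ell-1}$ and $E_{\bullet,\ell'}'\Pi E_{\bullet,\ell'-1}$ share a common column whenever $|\ell-\ell'|\le 1$, the Isserlis expansion produces many cross-terms of the form $\gamma(|\ell-\ell'|)$ times a contraction of $\Pi$, and verifying that they combine into an $O(nq)$ (rather than a worse) bound is the one place where the geometric decay and $\ell^1$-summability of the AR(1) autocovariances are genuinely used.
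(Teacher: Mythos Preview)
Your proposal is correct, and for \ref{th2(vii)}, \ref{th2(viii)}, \ref{th2(ix)} and \ref{th2(x)} your arguments are close to the paper's (the paper uses the elementary $\|\cdot\|_\infty$ bound together with $\rho(A)\le\|A\|_\infty$ rather than Toeplitz spectral theory for \ref{th2(vii)}--\ref{th2(viii)}, and for \ref{th2(x)} it performs exactly the telescoping computation you describe). The one substantive difference lies in the $\sqrt{nq}$-consistency of $\widehat{\phi}_1$.

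You center $N_n-\phi_1 D_n$ by computing expectations and invoking the Yule--Walker identity $\gamma(1)=\phi_1\gamma(0)$, and then propose a brute-force Isserlis expansion to bound the variance; you correctly flag this as the main obstacle. The paper instead uses the AR(1) recursion \emph{algebraically}: substituting $E_{\bullet,\ell}=\phi_1 E_{\bullet,\ell-1}+Z_{\bullet,\ell}$ inside $N_n$ gives the exact identity
\[
N_n-\phi_1 D_n \;=\; \sum_{\ell=2}^{q} (\Pi Z_{\bullet,\ell})'(\Pi E_{\bullet,\ell-1}),
\]
not merely an equality in expectation. Because the innovation $Z_{\bullet,\ell}$ is independent of $E_{\bullet,\ell-1}$, each summand has mean zero and, more importantly, the cross-covariances between summands at different $\ell$ vanish except on the diagonal $\ell=\ell'$ once one expands $E_{\bullet,\ell-1}$ into its MA($\infty$) representation. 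The variance bound then collapses to $\sigma^4(1-\phi_1^2)^{-1}\,\textrm{Tr}(\Pi)\cdot q\le Cnq$ in two lines, bypassing the many $\gamma(|\ell-\ell'|)$-type cross terms you anticipate. Your route would reach the same destination, but the paper's trick of inserting the recursion before computing moments is what removes the obstacle you identified.
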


The proof of Proposition \ref{prop:cond_th_AR} is given in Section \ref{sec:proofs}. It is based on the following lemma.

\begin{lemma}\label{lem:AR:estim}
Assume that $(E_{1,t})_t$, $(E_{2,t})_t$, ..., $(E_{n,t})_t$ in Model (\ref{eq:model:matriciel}) are independent AR(1) processes satisfying:
$$
E_{i,t}-\phi_1 E_{i,t-1}=Z_{i,t},\; \forall i\in\{1,\dots,n\},
$$
where the $Z_{i,t}$'s are zero-mean i.i.d. Gaussian random variables with variance $\sigma^2$ and $|\phi_1|<1$. Let
$$
\widehat{\phi}_1=\frac{\sum_{i=1}^n\sum_{\ell=2}^q \widehat{E}_{i,\ell} \widehat{E}_{i,\ell-1}}{\sum_{i=1}^n\sum_{\ell=1}^{q-1} \widehat{E}_{i,\ell}^2},
$$
where $\widehat{E}=(\widehat{E}_{i,\ell})_{1\leq i\leq n,1\leq \ell\leq q}$ is defined in  (\ref{eq: Ehat}).
Then,
$$
\sqrt{nq_n}(\widehat{\phi}_1-\phi_1)=O_p(1),\textrm{ as } n\to\infty.
$$
\end{lemma}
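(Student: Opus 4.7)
The proof will rest on a ``martingale-with-projection-correction'' decomposition. Write $H = X(X'X)^{-1}X'$ and $\Pi = \textrm{Id}_{\rset^n}-H$ so that, by (\ref{eq: Ehat}), $\widehat E_{\cdot,\ell}=\Pi E_{\cdot,\ell}$. Using the idempotence of $\Pi$, the numerator and denominator of $\widehat\phi_1$ become quadratic forms in the true noise columns: $\sum_i \widehat E_{i,\ell}\widehat E_{i,\ell-1} = E_{\cdot,\ell-1}'\Pi E_{\cdot,\ell}$ and $\sum_i \widehat E_{i,\ell}^2 = E_{\cdot,\ell}'\Pi E_{\cdot,\ell}$. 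Splitting $\Pi = \textrm{Id}_{\rset^n} - H$, set
$$N_0 = \sum_{\ell=2}^q E_{\cdot,\ell-1}'E_{\cdot,\ell},\ N_1=\sum_{\ell=2}^q E_{\cdot,\ell-1}'HE_{\cdot,\ell},\ D_0 = \sum_{\ell=1}^{q-1} E_{\cdot,\ell}'E_{\cdot,\ell},\ D_1 = \sum_{\ell=1}^{q-1} E_{\cdot,\ell}'HE_{\cdot,\ell},$$
so that $\widehat\phi_1 = (N_0-N_1)/(D_0-D_1)$ and $\widehat\phi_1 - \phi_1 = [(N_0-\phi_1 D_0) - (N_1-\phi_1 D_1)]/(D_0-D_1)$. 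This identity reduces the lemma to controlling a main martingale term, a projection correction, and a denominator.

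The AR(1) relation $E_{\cdot,\ell}=\phi_1 E_{\cdot,\ell-1}+Z_{\cdot,\ell}$ collapses the numerator into two martingale sums, $N_0-\phi_1 D_0 = \sum_{\ell=2}^q E_{\cdot,\ell-1}'Z_{\cdot,\ell}$ and $N_1-\phi_1 D_1 = \sum_{\ell=2}^q E_{\cdot,\ell-1}'HZ_{\cdot,\ell}$, both adapted to $\mathcal F_\ell=\sigma(Z_{\cdot,1},\ldots,Z_{\cdot,\ell})$. Because $Z_{\cdot,\ell}$ is independent of $\mathcal F_{\ell-1}$, the increments are orthogonal and variances add; conditioning on $\mathcal F_{\ell-1}$ and using stationarity ($\PE\|E_{\cdot,\ell-1}\|^2 = n\gamma(0)$ with $\gamma(0)=\sigma^2/(1-\phi_1^2)$) together with $H^2=H$ and $\textrm{tr}(H)=p$ gives $\Var(E_{\cdot,\ell-1}'Z_{\cdot,\ell}) = n\sigma^2\gamma(0)$ and $\Var(E_{\cdot,\ell-1}'HZ_{\cdot,\ell}) = p\sigma^2\gamma(0)$. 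Chebyshev then yields $N_0-\phi_1 D_0 = O_P(\sqrt{nq})$ while $N_1-\phi_1 D_1 = O_P(\sqrt{q})$, so the projection correction is genuinely of lower order. For the denominator, $D_0/n$ is a sample mean of $n$ i.i.d. stationary quadratic functionals with mean $(q-1)\gamma(0)$ and variance $O(q)$ (from $\Cov(E_{i,\ell}^2,E_{i,\ell'}^2)=2\gamma(0)^2\phi_1^{2|\ell-\ell'|}$ together with $\sum_h \phi_1^{2|h|}<\infty$), so $D_0 = nq\gamma(0)(1+o_P(1))$; similarly $\PE[D_1]=(q-1)p\gamma(0)$ and Isserlis' identity yields $\Var(D_1)=O(q)$, hence $D_1 = O_P(q)$. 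Since $p$ is fixed, $D_0-D_1 = nq\gamma(0)(1+o_P(1))$, and combining gives $\widehat\phi_1-\phi_1 = O_P(\sqrt{nq})/O_P(nq) = O_P((nq)^{-1/2})$, which is the claim.

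The only real obstacle is the sharp bound $N_1-\phi_1 D_1 = O_P(\sqrt{q})$. A naive Cauchy--Schwarz applied to $N_1$ and $\phi_1 D_1$ separately produces $O_P(n\sqrt{q})$, which is too large by a factor $\sqrt{n}$ and would spoil the final rate. The refinement comes from performing the subtraction \emph{before} bounding, so that the AR(1) decomposition leaves only the innovations $Z_{\cdot,\ell}$; the conditional variance then depends on $\textrm{tr}(H^2)=p$ rather than on $\textrm{tr}(\textrm{Id}_{\rset^n})=n$. This is precisely where the assumption that $p$ is fixed (and much smaller than $n$) plays its essential role.
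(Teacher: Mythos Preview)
Your proof is correct and follows the same strategy as the paper's: use the AR(1) relation to rewrite $\widehat\phi_1-\phi_1$ as a ratio whose numerator is a martingale sum in the innovations $Z_{\bullet,\ell}$, bound its second moment, and show the normalized denominator converges in probability to $\gamma(0)=\sigma^2/(1-\phi_1^2)$. The only difference is presentational: you split $\Pi=\textrm{Id}-H$ and control the $I$-part and $H$-part separately, whereas the paper keeps $\Pi$ intact, writes the numerator directly as $\sum_{\ell}(\Pi Z_{\bullet,\ell})'(\Pi E_{\bullet,\ell-1})$, and bounds its variance by $q\sigma^4\,\textrm{Tr}(\Pi)/(1-\phi_1^2)\le nq\sigma^4/(1-\phi_1^2)$ in one stroke; consequently the ``obstacle'' in your last paragraph (the need for the sharp $O_P(\sqrt q)$ bound on $N_1-\phi_1 D_1$) is an artifact of your split rather than an intrinsic difficulty---$O_P(\sqrt{nq})$ on the full numerator is all that is required.
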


Lemma \ref{lem:AR:estim} is proved in Section \ref{sec:proofs}. Its proof is based on Lemma \ref{lem:denom} in Section \ref{sec:lemmas}.



\section{Numerical experiments}\label{sec:num_exp}

\noindent The goal of this section  is twofold: $i)$ to provide sanity
checks for  our theoretical  results in  a well-controlled  framework; 
and  $ii)$ to investigate the robustness of our  estimator to some violations of the
assumptions of our theoretical results.  The  latter may reveal a  broader scope of
applicability for our method than the one guaranteed by the theoretical results.

We  investigate  $i)$  in  the  AR(1)  framework  presented  in
Section~\ref{subsec:AR1}.    Indeed,    all   assumptions    made   in
Theorems~\ref{th1}    and   \ref{th2}    can    be   specified    with
well-controllable  simulation  parameters  in   the  AR(1)  case  with
balanced design matrix $X$.

Point  $ii)$  aims  to  explore the  limitations  of  our  theoretical
framework  and assess  its robustness.   To this  end, we  propose two
numerical studies  relaxing some of  the assumptions of  our theorems:
first, we study  the effect of an unbalanced design  -- which violates
the sufficient condition of the irrepresentability condition \textbf{\rm (IC)} given in Proposition \ref{prop_IC} -- on the
sign-consistency;  and second, we  study the effect  of other types of 
dependence  than an AR(1).  

In all experiments, the performance are assessed in terms of sign-consistency.
In  other words,  we evaluate  the probability  for the  sign of
various  estimators  to  be  equal to  $\sign(\B)$.   We  compare  the
performance of three different estimators:
\begin{itemize}
\item $\widehat{\mathcal{B}}$ defined in (\ref{eq:lasso}),  which corresponds to the LASSO criterion applied
  to  the data  whitened with  the true  covariance matrix $\Sigma$;   
  we  call   this   estimator   \texttt{oracle}. Its theoretical properties are established in
  Theorem~\ref{th1}.
\item  $\widetilde{\mathcal{B}}$ defined in (\ref{eq:lasso_tilde}), which  corresponds  to the LASSO criterion
  applied  to the  data whitened  with  an  estimator of  the covariance matrix $\widehat{\Sigma}$; we  refer to this  estimator as
  \texttt{whitened-lasso}. Its theoretical properties are established in Theorem~\ref{th2}.
\item  the  LASSO  criterion applied to  the raw  data,  which we  call
  \texttt{raw-lasso}  hereafter. Its theoretical properties are established only in the univariate case in \cite{alquier:doukhan:2011}.
\end{itemize}

\subsection{AR(1) dependence structure with balanced one-way ANOVA}\label{sec:num:goodcase}

In this section,  we consider Model~\eqref{eq:model:matriciel} where
$X$ is the design matrix of  a one-way ANOVA with two balanced
groups.   Each  row  of  the random error  matrix  $E$  is distributed as a centered
Gaussian  random vector as  in
Equation~\eqref{eq:def_E} where the matrix $\Sigma$ is the  covariance matrix of an AR(1) process
defined in Section \ref{subsec:AR1}.

In this setting,  Assumptions \ref{th1(i)}, \ref{th1(ii)} and Condition~\ref{eq:irrep}
 of Theorem \ref{th1} are satisfied, see Propositions \ref{propcond} and \ref{prop_IC}.
The  three remaining  assumptions~\ref{th1(iii)}, \ref{th1(iv)}         and
\ref{assum:lambda_nq}  are  related   to  more  practical  quantities:
\ref{th1(iii)} controls  the sparsity level of  the problem, involving
$c_1$;  \ref{th1(iv)} basically controls  the signal-to-noise
ratio,  involving  $c_2$  and  \ref{assum:lambda_nq}  links
the  sample size $n$, $q$ and  the two
constants $c_1$, $c_2$, so that an appropriate range of penalty $\lambda$ exists for
having  a large  probability  of support  recovery. This  latter
assumption is  used in our  experiments to  tune the difficulty  of the
support recovery as follows: we consider different values of $n$, $q$, $c_1$, $c_2$
 and we choose  a sparsity level $|J|$  and a
minimal  magnitude  in  $\B$   such  that  Assumptions  \ref{th1(iii)}
 and \ref{th1(iv)} are fulfilled. Hence,
the  problem  difficulty is  essentially  driven  by the  validity  of
Assumption     \ref{assum:lambda_nq}    where     $q=o(n^{k})$    with
$c_1+c_2=1/2k$, and so by the relationship between $n$, $q$ and $k$.

We consider  a large  range of  sample sizes $n$  varying from  $10$ to
$1000$  and   three  different   values  for  
$q$ in  $\{10, 50, 1000  \}$.  The  constants $c_1$, $c_2$ are  chosen such
that      $c_1      +      c_2=1/2k$      with      $c_1=c_2$      and
$k$ in $\{1,2,4\}$. Additional   values   of  $c_1$ and $c_2$   have also been considered
and the corresponding results are available upon request. Finally,  we consider two values for
 the parameter $\phi_1$ appearing in the definition of the AR(1) process: $\phi_1\in\{0.5,  0.95\}$.  

Note that in this AR(1) setting with the estimator $\widehat{\phi}_1$ of $\phi_1$ defined in \eqref{eq:phi_1_hat} , all
the assumptions of  Theorem~\ref{th2} are  fulfilled,  see Proposition~\ref{prop:cond_th_AR}.   

The  frequencies  of support  recovery  for the  three  estimators
averaged    over  1000    replications    is     displayed    in
Figure~\ref{fig:ar1bal}.

\begin{figure}[htbp!]
\centering
\includegraphics[scale=1]{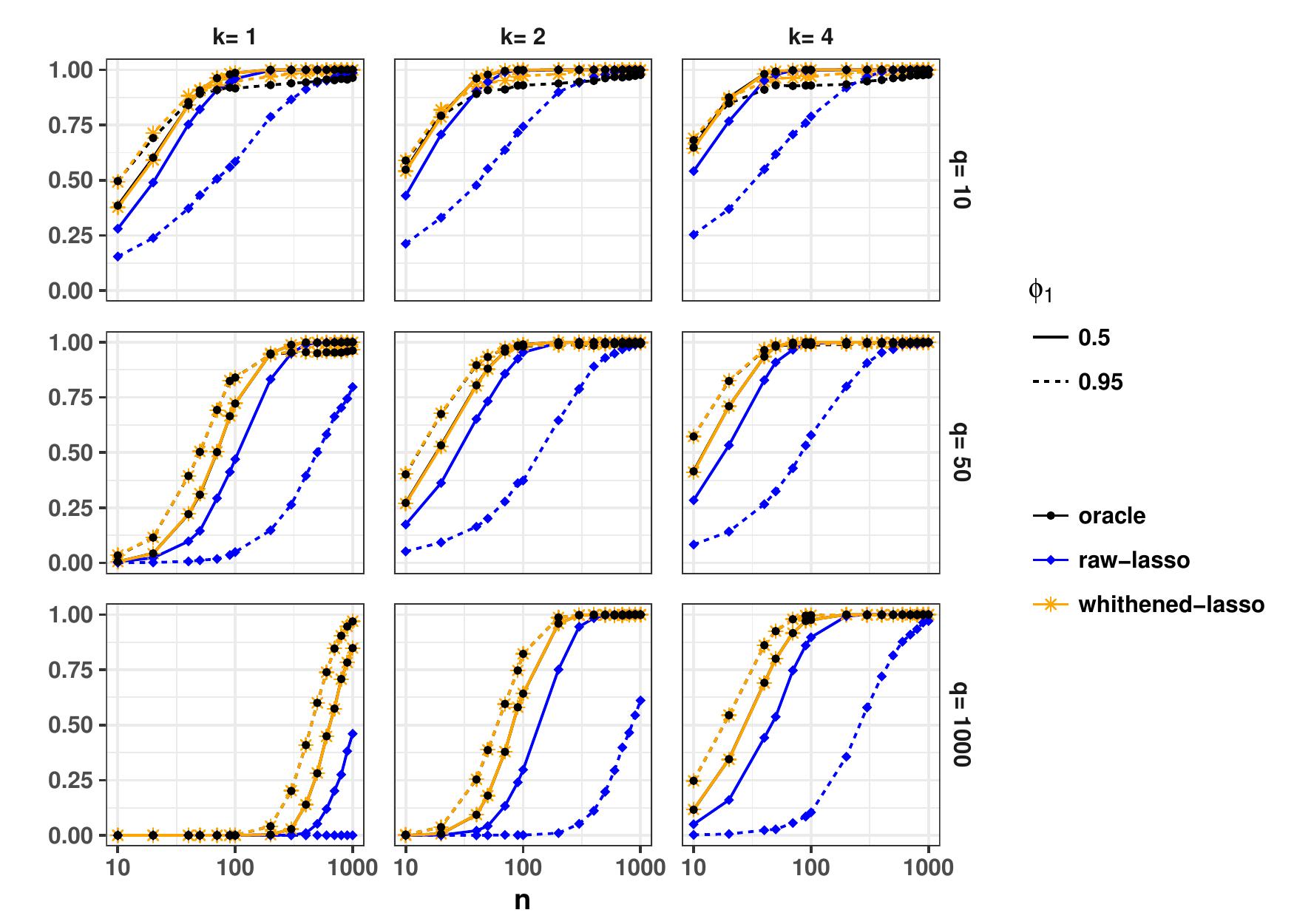}
\caption{Frequencies  of support  recovery  in a multivariate  one-way
  ANOVA model with two balanced groups and an AR(1) dependence.}
\label{fig:ar1bal}
\end{figure}

We observe from Figure~\ref{fig:ar1bal} that \texttt{whitened-lasso}   and \texttt{oracle}
have similar performance since $\phi_1$ is well estimated. These two approaches always exhibit better performance
than \texttt{raw-lasso}, especially when $\phi_1=0.95$. In this case, the sample size $n$ required to reach the same
performance  is indeed ten  time larger  for \texttt{raw-lasso} than for
\texttt{oracle} and \texttt{whitened-lasso}.

Finally, the performance  of all estimators  are altered when $n$ is too
small, especially  in situations
where the signal to noise ratio (SNR) is small  and the signal is not sparse enough,  these two
characteristics corresponding  to small values  of $k$.

\subsection{Robustness to unbalanced designs and correlated features}\label{sec:num:robustness}

The goal of this section is to study some particular design matrices
$X$ in Model~\eqref{eq:model:matriciel} that  may lead to violation of
the  Irrepresentability Condition~\ref{eq:irrep}.   

To    this    end,    we     consider    the    multivariate    linear
model~\eqref{eq:model:matriciel}  with   the  same   AR(1)  dependence
as the one considered in Section \ref{sec:num:goodcase}. Then, two 
different matrices $X$ are considered:
First,  an  one-way  ANOVA  model  with  two  unbalanced  groups  with
respective  sizes $n_1$ and $n_2$  such that  $n_1+n_2=n$;  and second,  a
multiple  regression model  with $p$ correlated  Gaussian predictors  such
that the rows of $X$ are i.i.d. $\mathcal{N}(0,\Sigma^X)$. 

For  the one-way ANOVA, violation
of \ref{eq:irrep} may occur when 
$r=n_1/n$    is   too    different    from   1/2,    as   stated    in
Proposition~\ref{prop_IC}.   For  the   regression  model,  we  choose for
$\Sigma^X$   a  $9\times   9$   matrix   ($p=9$) such  that   $\Sigma_{i,i}^X=1$,
$\Sigma_{i,j}^X = \rho$, when $i\neq j$. 
The other simulation parameters
are  fixed  as  in Section~\ref{sec:num:goodcase}.   

We  report  in
Figure \ref{fig:ic_violation} the results for the case where $q=1000$ and $k=2$
both for  unbalanced one-way  ANOVA (top  panels) and  regression with
correlated  predictors  (bottom  panels).    For  the  one-way  ANOVA,
 $r$  varies in  $\{0.4, 0.2,  0.1\}$. For  the regression
case, $\rho$ varies in $\{0.2,  0.6, 0.9\}$. In
both  cases, the  gray lines  correspond to  the  ideal
situation   (that  is,   either  unbalanced   or  uncorrelated) denoted \texttt{Ideal} in the legend of Figure \ref{fig:ic_violation}.   
The probability of support recovery is estimated over 1000 runs.

\begin{figure}[htbp!]
\centering
 \includegraphics{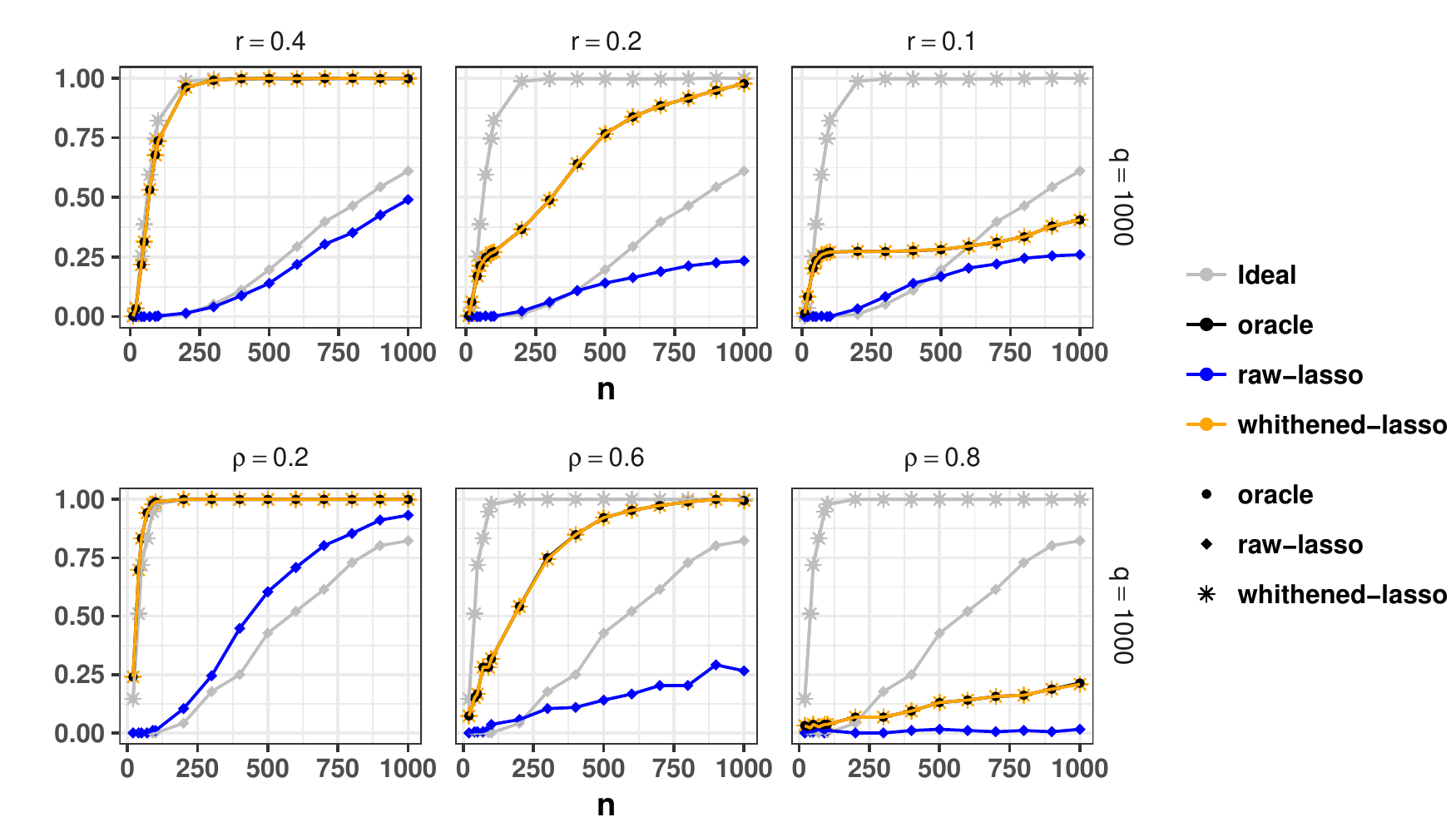}
\caption{Frequencies of support  recovery in  general linear  models
  with unbalanced designs: one-way ANOVA and regression.}
\label{fig:ic_violation}
\end{figure}

From  this figure,  we  note that  correlated  features or  unbalanced
designs deteriorate the  support recovery of all  estimators. This was
expected for these LASSO-based methods  which all suffer from the violation of
the  irrepresentability  condition~\ref{eq:irrep}.  However,  we  also
note that  \texttt{whitened-lasso} and  \texttt{oracle} have similar performance, which
means that the estimation of $\Sigma$ is
not altered, and that whitening always improves the support recovery.

\subsection{Robustness to more general autoregressive processes}

In  this section,  we consider  the  case where $X$ is the design matrix of a one-way ANOVA with two balanced
groups and  where $\Sigma$ is the covariance matrix of an AR($m$) process with $m$ in $\{5,10\}$.
Figure \ref{fig:arlong} displays the performance of the different estimators when $q=500$.
Here, for computing $\widehat{\Sigma}$ in \texttt{whitened-lasso}, the parameters $\phi_1,\dots,\phi_m$ of the AR($m$) process are estimated as follows.
They are obtained by averaging over the $n$ rows of $\widehat{E}$ defined in (\ref{eq: Ehat}) the estimations
$\widehat{\phi}_1^{(i)},\dots,\widehat{\phi}_m^{(i)}$ obtained for the $i$th row of $\widehat{E}$ by using standard estimation approaches for AR processes described in \cite{Brockwell:1990}.
As previously, we observe from this figure that \texttt{whitened-lasso} and \texttt{oracle} have better performance than \texttt{raw-lasso}.

\begin{figure}[htbp!]
\centering
\includegraphics[scale=1]{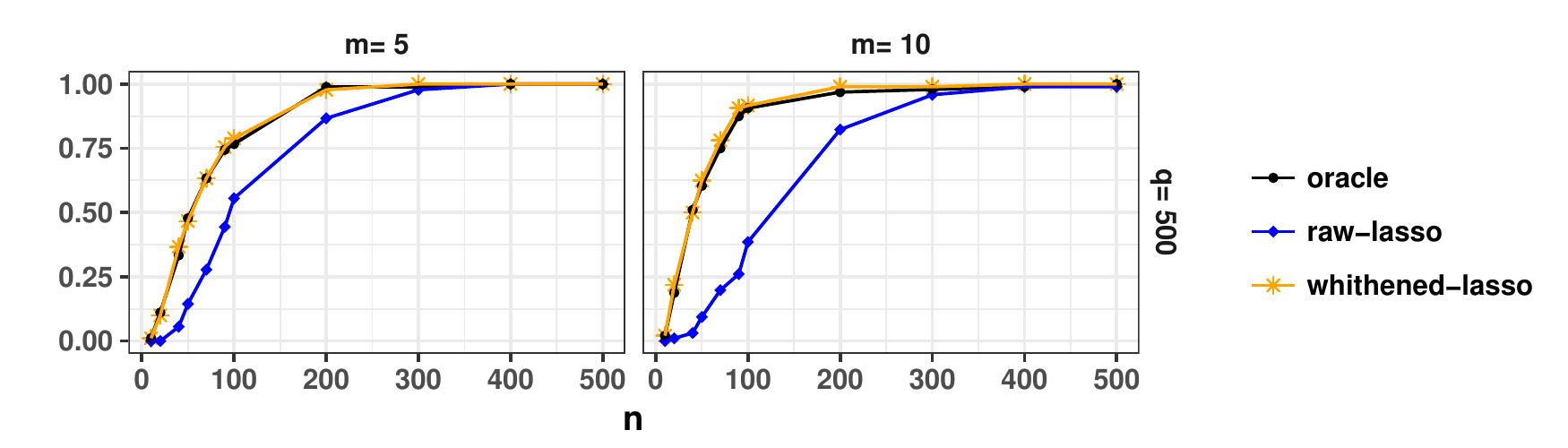}
\caption{Frequencies  of support  recovery in one-way ANOVA with AR($m$) covariance matrix.}
\label{fig:arlong}
\end{figure}


\section{Proofs}\label{sec:proofs}

\begin{proof}[Proof of Proposition \ref{prop1}] 
For a fixed nonnegative $\lambda$, by \eqref{eq:lasso},
$$
\widehat{\mathcal{B}}=\widehat{\mathcal{B}}(\lambda)=\textrm{Argmin}_{\mathcal{B}}\left\{\|\mathcal{Y}-\mathcal{X}\mathcal{B}\|_2^2+\lambda\|\mathcal{B}\|_1\right\}.
$$
Denoting  $\widehat{u}=\widehat{\mathcal{B}}-\mathcal{B}$, we get
\begin{align*}
\|\mathcal{Y}-\mathcal{X}\widehat{\mathcal{B}}\|_2^2+\lambda\|\widehat{\mathcal{B}}\|_1
&=\|\mathcal{X}\mathcal{B}+\mathcal{E}-\mathcal{X}\widehat{\mathcal{B}}\|_2^2+\lambda \|\widehat{u}+\mathcal{B}\|_1
=\|\mathcal{E}-\mathcal{X}\widehat{u}\|_2^2+\lambda \|\widehat{u}+\B\|_1\\
&=\|\E\|_2^2-2\widehat{u}'\X'\E+\widehat{u}'\X'\X\widehat{u}+\lambda \|\widehat{u}+\B\|_1.
\end{align*}
Thus,
$$
\widehat{u}=\textrm{Argmin}_u\; V(u),
$$
where
$$
V(u)=-2 (\sqrt{nq}u)' W+(\sqrt{nq}u)' C (\sqrt{nq}u)+\lambda \|u+\B\|_1.
$$
Since the first derivative of $V$ with respect to $u$ is equal to
$$
2\sqrt{nq}\big(C(\sqrt{nq}u)-W\big)+\lambda\,\sign(u+\B),$$
$\widehat{u}$ satisfies
\[
C_{J,J}(\sqrt{nq}\widehat{u}_J)-W_J=-\frac{\lambda}{2\sqrt{nq}}\sign(\widehat{u}_J+\B_J)
=-\frac{\lambda}{2\sqrt{nq}}\sign(\widehat{\B}_J), \textrm{\, if \,}\widehat{u}_J+\B_J=\widehat{\B}_J\neq 0
\]
and
\[
\big|C_{J^c,J}(\sqrt{nq}\widehat{u}_J)-W_{J^c}\big|\leq\frac{\lambda}{2\sqrt{nq}}.\]
%
Note that, if $|\widehat{u}_J|  < |\B_J|$, then $\widehat{\B}_J\neq 0$ and $\textrm{sign}(\widehat{\B}_J)=\textrm{sign}(\B_J)$.

Let us now prove that when $A_n$ and $B_n$, defined in (\ref{eq:An}) and (\ref{eq:Bn}), are satisfied then there exists 
$\widehat{u}$ satisfying:
\begin{align}
C_{J,J}(\sqrt{nq}\widehat{u}_J)-W_J&=-\frac{\lambda}{2\sqrt{nq}}\textrm{sign}(\B_J),\label{eq:support1}\\
|\widehat{u}_J| & < |\B_J|,\label{eq:support2}\\
\big|C_{J^c,J}(\sqrt{nq}\widehat{u}_J)-W_{J^c}\big|&\leq\frac{\lambda}{2\sqrt{nq}}\label{eq:support3}.
\end{align}
Note that $A_n$ implies:
\begin{equation}\label{eq:u1}
\sqrt{nq}\left(-|\B_J|+\frac{\lambda}{2nq}(C_{J,J})^{-1}\textrm{sign}(\B_J)\right)<(C_{J,J})^{-1}W_J<\sqrt{nq}\left(|\B_J|+\frac{\lambda}{2nq}(C_{J,J})^{-1}\textrm{sign}(\B_J)\right).
\end{equation}
By denoting
\begin{equation}\label{eq:u_J_hat}
\widehat{u}_J=\frac{1}{\sqrt{nq}}(C_{J,J})^{-1}W_J-\frac{\lambda}{2nq}(C_{J,J})^{-1}\textrm{sign}(\B_J),
\end{equation}
we obtain from (\ref{eq:u1}) that (\ref{eq:support1}) and (\ref{eq:support2}) hold.
Note that $B_n$ implies:
\begin{multline*}
-\frac{\lambda}{2\sqrt{nq}}\left(\mathbf{1}-C_{J^c,J}(C_{J,J})^{-1}\textrm{sign}(\B_J)\right)\\
\leq C_{J^c,J}(C_{J,J})^{-1}W_J-W_{J^c}\leq\frac{\lambda}{2\sqrt{nq}}\left(\mathbf{1}+C_{J^c,J}(C_{J,J})^{-1}
\textrm{sign}(\B_J)\right).
\end{multline*}
Hence,
$$
\left|C_{J^c,J}\left((C_{J,J})^{-1}W_J-\frac{\lambda}{2\sqrt{nq}}(C_{J,J})^{-1}\textrm{sign}(\B_J)\right)-W_{J^c}\right|\leq\frac{\lambda}{2\sqrt{nq}},
$$
which is (\ref{eq:support3}) by (\ref{eq:u_J_hat}). This concludes the proof.
\end{proof}

\begin{proof}[Proof of Theorem \ref{th1}]
By Proposition \ref{prop1},
$$
\PP\left(\textrm{sign}(\widehat{\B}(\lambda))=\textrm{sign}(\B)\right)\geq\PP(A_n\cap B_n)
=1-\PP(A_n^c\cup B_n^c)\geq 1-\PP(A_n^c)-\PP(B_n^c),
$$
where $A_n$ and $B_n$ are defined in (\ref{eq:An}) and (\ref{eq:Bn}). It is thus enough to prove that $\PP(A_n^c)$ and $\PP(B_n^c)$ tend to zero as $n$ tends to infinity.

By definition of $A_n$,
\begin{align}\label{eq:P_Anc}
\PP(A_n^c)&=\PP\left(\left|(C_{J,J})^{-1}W_J\right|\geq\sqrt{nq}\left(|\B_J|-\frac{\lambda}{2nq}|(C_{J,J})^{-1}\textrm{sign}(\B_J)|\right)\right)\nonumber\\
&\leq\sup_{j\in J} \PP\left(|\xi_j|\geq\sqrt{nq}\left(|\B_j|-\frac{\lambda}{2nq}|b_j|\right)\right),
\end{align}
where 
$$
\xi=(\xi_j)_{j\in J}=(C_{J,J})^{-1}W_J=\frac{1}{\sqrt{nq}}(C_{J,J})^{-1}(\X_{\bullet,J})'\E =: H_A\;\E,
$$
and
$$
b=(b_j)_{j\in J}=(C_{J,J})^{-1}\textrm{sign}(\B_J).
$$
By definition of $B_n$ and \ref{eq:irrep},
\begin{align}\label{eq:P_Bnc}
\PP(B_n^c)&=\PP\left(\left|C_{J^c,J}(C_{J,J})^{-1}W_J-W_{J^c}\right|>\frac{\lambda}{2\sqrt{nq}}\left(\mathbf{1}-\left|C_{J^c,J}(C_{J,J})^{-1}\textrm{sign}(\B_J)\right|\right)\right)\nonumber\\
&\leq \PP\left(\left|C_{J^c,J}(C_{J,J})^{-1}W_J-W_{J^c}\right|>\frac{\lambda}{2\sqrt{nq}}\eta\right)\nonumber\\
&\leq \sup_{j\in J^c} \PP\left(|\zeta_j|>\frac{\lambda}{2\sqrt{nq}}\eta\right),
\end{align}
where
$$
\zeta=(\zeta_j)_{j\in J^c}=C_{J^c,J}(C_{J,J})^{-1}W_J-W_{J^c}=\frac{1}{\sqrt{nq}}\big(C_{J^c,J}(C_{J,J})^{-1}(\X_{\bullet,J})'-(\X_{\bullet,J^c})'\big)\E
 =: H_B\;\E.
 $$
Note that, for all $j$ in $J$,
$$
|b_j|\leq\sum_{j\in J} |b_j|\leq \sqrt{|J|}\left(\sum_{j\in J} b_j^2\right)^{1/2}=\sqrt{|J|}\|b\|_2.
$$ 
Moreover,
$$
\|b\|_2=\|(C_{J,J})^{-1}\textrm{sign}(\B_J)\|_2\leq \|(C_{J,J})^{-1}\|_2\sqrt{|J|}:=\lambda_{\textrm{max}}((C_{J,J})^{-1})\sqrt{|J|}, 
$$
where $\lambda_{\textrm{max}}(A)$ denotes the largest eigenvalue of the matrix $A$. Observe that
\begin{equation}\label{eq:lambda_max_CJJ-1}
\lambda_{\textrm{max}}((C_{J,J})^{-1})=\frac{1}{\lambdaMin(C_{J,J})}=\frac{q}{\lambdaMin((\mathcal{X}'\mathcal{X})_{J,J})/n}\leq \frac{q}{M_2},
\end{equation}
by Assumption~\ref{th1(ii)} of Theorem~\ref{th1}. Thus, for all $j$ in $J$,
\begin{equation}\label{eq:bj}
|b_j|\leq \frac{q|J|}{M_2}.
\end{equation}
By Assumption~\ref{th1(iv)} of Theorem~\ref{th1}, we get thus that for all $j$ in $J$,
\begin{equation}\label{eq:minoration_terme_A_th1}
\sqrt{nq}\left(|\B_j|-\frac{\lambda}{2nq}\left|\left((C_{J,J})^{-1}\textrm{sign}(\B_J)\right)_j\right|\right)=\sqrt{nq}\left(|\B_j|-\frac{\lambda}{2nq}|b_j|\right)
\geq \sqrt{nq}\left(M_3 q^{-c_2} -\frac{\lambda q |J|}{2nq M_2}\right).
\end{equation}
Thus,
\begin{equation}\label{eq:Anc_bis}
\PP(A_n^c)\leq\sup_{j\in J} \PP\left(|\xi_j|\geq\sqrt{nq}\left(M_3 q^{-c_2} -\frac{\lambda q |J|}{2nq M_2}\right)\right).
\end{equation}
Since $\E$ is a centered Gaussian random vector having a covariance matrix equal to identity,
$\xi= H_A\;\E$ is a centered Gaussian random vector with a covariance matrix equal to:
$$
H_A H_A'=\frac{1}{nq}(C_{J,J})^{-1}(\X_{\bullet,J})' \X_{\bullet,J}(C_{J,J})^{-1}=(C_{J,J})^{-1}.
$$
Hence, by (\ref{eq:lambda_max_CJJ-1}), we get that for all $j$ in $J$,
$$
\Var(\xi_j)=\left((C_{J,J})^{-1}\right)_{jj}\leq\lambda_{\textrm{max}}(C_{J,J}^{-1})\leq \frac{q}{M_2}.
$$
Thus,
$$
\PP\left(|\xi_j|\geq\sqrt{nq}\left(M_3 q^{-c_2} -\frac{\lambda q |J|}{2nq M_2}\right)\right)
\leq \PP\left(|Z|\geq\frac{\sqrt{M_2}}{\sqrt{q}}\left(M_3 q^{-c_2}\sqrt{nq}-\frac{\lambda q |J|}{2\sqrt{nq} M_2}\right)\right),
$$ 
where $Z$ is a standard Gaussian random variable.
By Chernoff inequality, we thus obtain that for all $j$ in $J$,
$$
\PP\left(|\xi_j|\geq\sqrt{nq}\left(M_3 q^{-c_2} -\frac{\lambda q |J|}{2nq M_2}\right)\right)
\leq 2\exp\left(-\frac{M_2}{2q}\left\{M_3 q^{-c_2}\sqrt{nq}-\frac{\lambda q |J|}{2\sqrt{nq} M_2}\right\}^2\right).
$$
By Assumption~\ref{th1(iii)} of Theorem~\ref{th1}, we get that under the last condition of \ref{assum:lambda_nq},
\begin{equation}\label{eq:lambda_J_sqrt_nq}
\frac{\lambda q |J|}{\sqrt{nq}}=o\left(q^{-c_2}\sqrt{nq}\right), \textrm{ as } n\to\infty.
\end{equation}
Thus,
\begin{equation}\label{eq:Anc:limit}
\PP(A_n^c)\to 0,\; \textrm{ as } n\to\infty.
\end{equation}

Let us now bound $\PP(B_n^c)$. Observe that
$\zeta= H_B\;\E$ is a centered Gaussian random vector with a covariance matrix equal to:
\begin{align*}
H_B H_B' &=\frac{1}{nq}(C_{J^c,J}(C_{J,J})^{-1}(\X_{\bullet,J})'-\X_{\bullet,J^c}')(\X_{\bullet,J}(C_{J,J})^{-1}C_{J,J^c}-\X_{\bullet,J^c})\\
&=C_{J^c,J^c}-C_{J^c,J}(C_{J,J})^{-1}C_{J,J^c}=\frac{1}{nq} (\X_{\bullet,J^c})'\left(\textrm{Id}_{\rset^{nq}}-\X_{\bullet,J}((\X_{\bullet,J})'\X_{\bullet,J})^{-1}(\X_{\bullet,J})'\right)\X_{\bullet,J^c}\\
&=\frac{1}{nq} (\X_{\bullet,J^c})' \, \left(\textrm{Id}_{\rset^{nq}}-\Pi_{\textrm{Im}(\X_{\bullet,J})}\right) \, \X_{\bullet,J^c},
\end{align*}
where $\Pi_{\textrm{Im}(\X_{\bullet,J})}$ denotes the orthogonal projection onto the column space of $\X_{\bullet,J}$.
Note that, for all $j$ in $J^c$,
\begin{align*}
\Var(\zeta_j)&=\frac{1}{nq}\left((\X_{\bullet,J^c})'\; \left(\textrm{Id}_{\rset^{nq}}-\Pi_{\textrm{Im}(\X_{\bullet,J})}\right)\; \X_{\bullet,J^c}\right)_{jj}\\
&=\frac{1}{nq}\left((\X_{\bullet,J^c})' \X_{\bullet,J^c}\right)_{jj}-\frac{1}{nq}\left((\X_{\bullet,J^c})'\;\Pi_{\textrm{Im}(\X_{\bullet,J})}\X_{\bullet,J^c}\right)_{jj}\\
&\leq\frac{1}{nq}\left((\X_{\bullet,J^c})' \X_{\bullet,J^c}\right)_{jj}\leq \frac{M_1}{q},
\end{align*}
where the inequalities come from Lemma~\ref{lem:proj} and Assumption~\ref{th1(i)} of Theorem~\ref{th1}.
Thus, for all $j$ in $J^c$,
$$
\PP\left(|\zeta_j|>\frac{\lambda}{2\sqrt{nq}}\eta\right)\leq \PP\left(|Z|>\frac{\lambda\sqrt{q}}{2\sqrt{M_1}\sqrt{nq}}\eta\right),
$$
where $Z$ is a standard Gaussian random variable. By Chernoff inequality, for  all $j$ in $J^c$,
$$
\PP\left(|\zeta_j|>\frac{\lambda}{2\sqrt{nq}}\eta\right)\leq 2\exp\left\{-\frac12\left(\frac{\lambda}{2\sqrt{M_1}\sqrt{n}}\eta\right)^2\right\}.
$$
Hence, under the following assumption
$$
\frac{\lambda}{\sqrt{n}}\to\infty,
$$
which is the second condition of  \ref{assum:lambda_nq},
\begin{equation}\label{eq:Bnc:limit}
\PP(B_n^c)\to 0,\; \textrm{ as } n\to\infty.
\end{equation}
\end{proof}

\begin{proof}[Proof of Proposition \ref{propcond}]
Let us first prove that \ref{cond1} and \ref{cond3} imply \ref{th1(i)}.
For $j\in\{1,\ldots,pq\}$, by considering the Euclidian division of $j-1$ by $p$ given by $(j-1)=pk_j+r_j$, we observe that
\begin{align*}
(\mathcal{X}_{\bullet,j})'\mathcal{X}_{\bullet,j}&=(((\Sigma^{-1/2})' \otimes X)_{\bullet,j})'((\Sigma^{-1/2})' \otimes X)_{\bullet,j}\\
&=((\Sigma^{-1/2}) \otimes X')_{j,\bullet})((\Sigma^{-1/2})' \otimes X)_{\bullet,j}\\
&=((\Sigma^{-1/2})_{k_j+1,\bullet} \otimes (X_{\bullet,r_j+1})')(((\Sigma^{-1/2})_{\bullet,k_j+1})' \otimes X_{\bullet,r_j+1})\\
&=(\Sigma^{-1/2})_{k_j+1,\bullet}((\Sigma^{-1/2})_{\bullet,k_j+1})' \otimes (X_{\bullet,r_j+1})'X_{\bullet,r_j+1}\\
&=(\Sigma^{-1})_{k_j+1,k_j+1} \otimes(X_{\bullet,r_j+1})'X_{\bullet,r_j+1}\\
&=(\Sigma^{-1})_{k_j+1,k_j+1}(X_{\bullet,r_j+1})'X_{\bullet,r_j+1}.
\end{align*}
Hence, using \ref{cond1}, we get that for all $j$ in $\{1,\ldots,pq\}$,
\begin{align*}
\frac{1}{n}(\mathcal{X}_{\bullet,j})'\mathcal{X}_{\bullet,j}&\leq M'_1(\Sigma^{-1})_{k_j+1,k_j+1}
\leq M'_1\sup_{k \in \{0,\ldots,q-1\}}((\Sigma^{-1})_{k+1,k+1})\\
&  \leq M'_1\lambdaMax(\Sigma^{-1})\leq M'_1 m_1,
\end{align*}
where the last inequality comes from~\ref{cond3}, which gives (A1).

Let us now prove that \ref{cond2} and \ref{cond4} imply \ref{th1(ii)}.
%
%
Note that
\begin{align*}
(\mathcal{X}'\mathcal{X})_{J,J}&=(((\Sigma^{-1/2})' \otimes X)'((\Sigma^{-1/2})' \otimes X))_{J,J}\\
&=(\Sigma^{-1/2}(\Sigma^{-1/2})'\otimes X'X)_{J,J}\\
&=(\Sigma^{-1} \otimes X'X)_{J,J}.
\end{align*}
Then, by Theorem 4.3.15 of \cite{Horn:Johnson:1985}, 
\begin{align}
\lambda_{\min}((\mathcal{X}'\mathcal{X})_{J,J})&=\lambda_{\min}((\Sigma^{-1} \otimes X'X)_{J,J}) \nonumber\\
& \geq \lambda_{\min}(\Sigma^{-1} \otimes X'X) \nonumber\\
&=\lambda_{\min}(X'X)\lambda_{\min}(\Sigma^{-1}).\label{lmbgreat}
\end{align}
Finally, by using Conditions~\ref{cond2}~and~\ref{cond4}, we obtain
\[
\frac{1}{n}\lambda_{\min}(\mathcal{X}'\mathcal{X})_{J,J}\geq \frac{1}{n}\lambda_{\min}(X'X)\lambda_{\min}(\Sigma^{-1})\geq M'_2m_2,
\]
which gives \ref{th1(ii)}.
\end{proof}

\begin{proof}[Proof of Theorem \ref{th2}]
By Proposition \ref{prop2},
$$
\PP\left(\textrm{sign}(\widetilde{\mathcal{B}}(\lambda))=\textrm{sign}(\mathcal{B})\right)\geq\PP(\widetilde{A}_n\cap\widetilde{B}_n)
=1-\PP(\widetilde{A}_n^c\cup \widetilde{B}_n^c)\geq 1-\PP(\widetilde{A}_n^c)-\PP(\widetilde{B}_n^c),
$$
where $\widetilde{A}_n$ and $\widetilde{B}_n$ are defined in (\ref{eq:An_tilde}) and (\ref{eq:Bn_tilde}).
By definition of $\widetilde{A}_n$, we get
$$
\PP(\widetilde{A}_n^c)=\PP\left(\left\{\left|(\widetilde{C}_{J,J})^{-1}\widetilde{W}_J\right|\geq\sqrt{nq}\left(|\mathcal{B}_J|
-\frac{\lambda}{2nq}|(\widetilde{C}_{J,J})^{-1}\textrm{sign}(\mathcal{B}_J)|\right)\right\}\right).
$$
Observing that
\begin{align*}
(\widetilde{C}_{J,J})^{-1}\widetilde{W}_J&=(C_{J,J})^{-1}W_J+(C_{J,J})^{-1}\left(\widetilde{W}_J-W_J\right)\\
&+\left((\widetilde{C}_{J,J})^{-1}-(C_{J,J})^{-1}\right)W_J\\
&+\left((\widetilde{C}_{J,J})^{-1}-(C_{J,J})^{-1}\right)\left(\widetilde{W}_J-W_J\right),
\end{align*}
$$
(\widetilde{C}_{J,J})^{-1}\textrm{sign}(\mathcal{B}_J)=(C_{J,J})^{-1}\textrm{sign}(\mathcal{B}_J)+\left((\widetilde{C}_{J,J})^{-1}-(C_{J,J})^{-1}\right)
\textrm{sign}(\mathcal{B}_J),
$$
and using the triangle inequality, we obtain that
\begin{align}\label{eq:An_tilde_bound}
&\PP(\widetilde{A}_n^c)\leq\PP\left(\left\{\left|(C_{J,J})^{-1}W_J\right|\geq\frac{\sqrt{nq}}{5}\left(|\mathcal{B}_J|
-\frac{\lambda}{2nq}\left|(C_{J,J})^{-1}\textrm{sign}(\mathcal{B}_J)\right|\right)\right\}\right)\nonumber\\
&+\PP\left(\left\{\left|(C_{J,J})^{-1}\left(\widetilde{W}_J-W_J\right)\right|
\geq\frac{\sqrt{nq}}{5}\left(|\mathcal{B}_J|-\frac{\lambda}{2nq}\left|(C_{J,J})^{-1}\textrm{sign}(\mathcal{B}_J)\right|\right)\right\}\right)\nonumber\\
&+\PP\left(\left\{\left|\left((\widetilde{C}_{J,J})^{-1}-(C_{J,J})^{-1}\right)W_J\right|
\geq\frac{\sqrt{nq}}{5}\left(|\mathcal{B}_J|-\frac{\lambda}{2nq}\left|(C_{J,J})^{-1}\textrm{sign}(\mathcal{B}_J)\right|\right)\right\}\right)\nonumber\\
&+\PP\left(\left\{\left|\left((\widetilde{C}_{J,J})^{-1}-(C_{J,J})^{-1}\right)\left(\widetilde{W}_J-W_J\right)\right|
\geq\frac{\sqrt{nq}}{5}\left(|\mathcal{B}_J|-\frac{\lambda}{2nq}\left|(C_{J,J})^{-1}\textrm{sign}(\mathcal{B}_J)\right|\right)\right\}\right)\nonumber\\
&+\PP\left(\left\{\frac{\lambda}{2\sqrt{nq}}\left|\left((\widetilde{C}_{J,J})^{-1}-(C_{J,J})^{-1}\right)\textrm{sign}(\mathcal{B}_J)\right|\geq
\frac{\sqrt{nq}}{5}\left(|\mathcal{B}_J|-\frac{\lambda}{2nq}\left|(C_{J,J})^{-1}\textrm{sign}(\mathcal{B}_J)\right|\right)\right\}\right).\nonumber\\
\end{align}
The first term in the r.h.s of (\ref{eq:An_tilde_bound}) tends to 0 by the definition of ${A}_n^c$ and (\ref{eq:Anc:limit}).
By (\ref{eq:minoration_terme_A_th1}), the last term of (\ref{eq:An_tilde_bound}) satisfies, for all $j\in J$:
\begin{align*}
&\PP\left(\left|\left((\widetilde{C}_{J,J})^{-1}-(C_{J,J})^{-1}\right)\textrm{sign}(\mathcal{B}_J)\right|\geq
\frac{2 nq}{5\lambda}\left(|\mathcal{B}_J|-\frac{\lambda}{2nq}\left|(C_{J,J})^{-1}\textrm{sign}(\mathcal{B}_J)\right|\right)\right)\\
&\leq \PP\left(\left|\left(\left((\widetilde{C}_{J,J})^{-1}-(C_{J,J})^{-1}\right)\textrm{sign}(\mathcal{B}_J)\right)_j\right|\geq
\frac{2 nq}{5\lambda}\left(M_3 q^{-c_2} -\frac{\lambda q|J|}{2nq M_2}\right)\right).
\end{align*}
Let $U=(\widetilde{C}_{J,J})^{-1}-(C_{J,J})^{-1}$ and $s=\textrm{sign}(\mathcal{B}_J)$ then for all $j$ in $J$:
\begin{equation}\label{eq:U_s_j}
|(U s)_j|=\left|\sum_{k\in J} U_{jk} s_k\right|\leq \sqrt{|J|}\|U\|_{2}.
\end{equation}
We focus on
\begin{align*}
&\|(\widetilde{C}_{J,J})^{-1}-(C_{J,J})^{-1}\|_2=\|(\widetilde{C}_{J,J})^{-1}(C_{J,J}-\widetilde{C}_{J,J})(C_{J,J})^{-1}\|_{2}
\leq \|(\widetilde{C}_{J,J})^{-1}\|_{2}\;\|C_{J,J}-\widetilde{C}_{J,J}\|_{2}\;\|(C_{J,J})^{-1}\|_{2}\\
&\leq \frac{\rho(C_{J,J}-\widetilde{C}_{J,J})}{\lambdaMin(\widetilde{C}_{J,J})\lambdaMin({C}_{JJ})}
\leq \frac{\rho(C_{J,J}-\widetilde{C}_{J,J})}{\lambdaMin(\widetilde{C}_{J,J}) (M_2/q)},
\end{align*}
where the last inequality comes from Assumption \ref{th1(ii)} of Theorem \ref{th1}, which gives that
\begin{equation}\label{eq:CJJ-1:bound}
\|(C_{J,J})^{-1}\|_{2}\leq \frac{q}{M_2}.
\end{equation}
Using Theorem 4.3.15 of \cite{Horn:Johnson:1985}, we get
$$
\|(\widetilde{C}_{J,J})^{-1}-(C_{J,J})^{-1}\|_2\leq \frac{q\rho(C-\widetilde{C})}{\lambda_{\textrm{min}}(\widetilde{C}) M_2}.
$$
By definition of $C$ and $\widetilde{C}$ given in (\ref{eq:C:J}) and (\ref{eq:C_tilde}), respectively, we get
\begin{equation}\label{eq:C_C_tilde}
C=\frac{\Sigma^{-1}\otimes (X'X)}{nq} \textrm{ and } \widetilde{C}=\frac{\widehat{\Sigma}^{-1}\otimes (X'X)}{nq}.
\end{equation}
By using that the eigenvalues of the Kronecker product of two matrices is equal to the product of the eigenvalues of the two matrices, we obtain
\begin{align}\label{eq:norme2_diff_CJJ}
\|(\widetilde{C}_{J,J})^{-1}-(C_{J,J})^{-1}\|_{2}
&\leq\frac{\rho(\Sigma^{-1}-\widehat{\Sigma}^{-1})\lambda_{\textrm{max}}((X'X)/n)q}{\lambda_{\textrm{min}}(\widehat{\Sigma}^{-1}) 
\lambda_{\textrm{min}}((X'X)/n) M_2}\leq \frac{\rho(\Sigma^{-1}-\widehat{\Sigma}^{-1})\lambda_{\textrm{max}}(\widehat{\Sigma}) \lambda_{\textrm{max}}((X'X)/n)q}
{\lambda_{\textrm{min}}((X'X)/n) M_2}\nonumber\\
&\leq\frac{\rho(\Sigma^{-1}-\widehat{\Sigma}^{-1})\left(\rho(\widehat{\Sigma}-\Sigma)+\lambda_{\textrm{max}}(\Sigma)\right) \lambda_{\textrm{max}}((X'X)/n)q}
{\lambda_{\textrm{min}}((X'X)/n) M_2},
\end{align}
where the last inequality follows from Theorem 4.3.1 of \cite{Horn:Johnson:1985}. Thus, by Assumptions \ref{th2(v)}, \ref{th2(vi)}, \ref{th2(viii)},  \ref{th2(ix)} and \ref{th2(x)}, we get that
\begin{equation}\label{eq:norm2_Ctilde_JJ-1_-C_JJ-1}
\|(\widetilde{C}_{J,J})^{-1}-(C_{J,J})^{-1}\|_{2}=O_P(q(nq)^{-1/2}), \textrm{ as } n\to\infty.
\end{equation}
%
Hence, by (\ref{eq:U_s_j}), we get for all $j$ in $J$ that
\begin{align*}
&\PP\left(\left|\left(\left((\widetilde{C}_{J,J})^{-1}-(C_{J,J})^{-1}\right)\textrm{sign}(\mathcal{B}_J)\right)_j\right|\geq
\frac{2 nq}{5\lambda}\left(M_3 q^{-c_2} -\frac{\lambda q|J|}{2nq M_2}\right)\right)\\
&\leq\PP\left(\sqrt{|J|}\;\|(\widetilde{C}_{J,J})^{-1}-(C_{J,J})^{-1}\|_{2}\geq
\frac{2 \sqrt{nq}}{5\lambda}\left(M_3 q^{-c_2}\sqrt{nq} -\frac{\lambda q|J|}{2\sqrt{nq} M_2}\right)\right).
\end{align*}
By (\ref{eq:lambda_J_sqrt_nq}), (\ref{eq:norm2_Ctilde_JJ-1_-C_JJ-1}) and \ref{th1(iii)}, it is enough to prove that
$$
\PP\left(q^{c_1/2}q(nq)^{-1/2}\geq \frac{nq}{\lambda}q^{-c_2}\right)\to 0, \textrm{ as } n\to\infty.
$$
By the last condition of \ref{assum:lambda_nq},
$$
\frac{\frac{nq}{\lambda}q^{-c_2}}{q^{1+c_1}}\to\infty, \textrm{ as } n\to\infty,
$$
and the result follows since $n$ tends to infinity.
Hence, the last term of (\ref{eq:An_tilde_bound}) tends to zero as $n$ tends to infinity.

Let us now study the second term in the r.h.s of (\ref{eq:An_tilde_bound}).
\begin{align}\label{eq:diff_Wn}
&\widetilde{W}_J-W_J=\frac{1}{\sqrt{nq}}\left(\left(\widetilde{\mathcal{X}}'\widetilde{\mathcal{E}}\right)_J-\left(\mathcal{X}'\mathcal{E}\right)_J\right)
=\frac{1}{\sqrt{nq}}\left(\widetilde{\mathcal{X}}'\widetilde{\mathcal{E}}-\mathcal{X}'\mathcal{E}\right)_J\nonumber\\
&=\frac{1}{\sqrt{nq}}\left[\left(\widehat{\Sigma}^{-1/2}\otimes X'\right)\left((\widehat{\Sigma}^{-1/2})'\otimes \textrm{Id}_{\rset^n}\right)\textrm{Vec}(E)
-\left(\Sigma^{-1/2}\otimes X'\right)\left((\Sigma^{-1/2})'\otimes \textrm{Id}_{\rset^n}\right)\textrm{Vec}(E)\right]_J\nonumber\\
&=\frac{1}{\sqrt{nq}}\left[\left\{\left(\widehat{\Sigma}^{-1}-\Sigma^{-1}\right)\otimes X'\right\}\textrm{Vec}(E)\right]_J
\stackrel{d}{=}AZ,
\end{align}
where $Z$ is a centered Gaussian random vector having a covariance matrix equal to identity and
\begin{equation}\label{eq:def_A}
A=\frac{1}{\sqrt{nq}}\left[\left\{\left(\widehat{\Sigma}^{-1}-\Sigma^{-1}\right)\otimes X'\right\}\left\{(\Sigma^{1/2})'\otimes\textrm{Id}_{\rset^n}\right\}\right]_{J,\bullet}.
\end{equation}
By Cauchy-Schwarz inequality, we get for all $K\times nq$ matrix $B$, and all $nq\times 1$ vector $U$ that for all $k$ in $\{1,\dots,K\}$,
\begin{equation}\label{eq:Cauchy-Schwarz}
\left|(BU)_k\right|=\left|\sum_{\ell=1}^{nq} B_{k,\ell} U_\ell\right|\leq \|B\|_2\;\|U\|_2.
\end{equation}
Thus, for all $j$ in $J$, for all $\gamma$ in $\rset$ and all $|J|\times |J|$ matrix $D$,
\begin{equation}\label{eq:B_WJ-WJtilde}
\PP\left(\left|\left(D\left(\widetilde{W}_J-W_J\right)\right)_j\right|\geq \gamma\right)
=\PP\left(\left|\left(DAZ\right)_j\right|\geq \gamma\right)
\leq\PP\left(\|D\|_2\;\|A\|_2 \|Z\|_2\geq \gamma\right),
\end{equation}
where $A$ is defined in (\ref{eq:def_A}) and $Z$ is a centered Gaussian random vector having a covariance matrix equal to identity. Hence, for all $j$ in $J$,
\begin{align*}
&\PP\left(\left|\left((C_{J,J})^{-1}\left(\widetilde{W}_J-W_J\right)\right)_j\right|
\geq\frac{\sqrt{nq}}{5}\left(|\mathcal{B}_j|-\frac{\lambda}{2nq}\left|\left((C_{J,J})^{-1}\textrm{sign}(\mathcal{B}_J)\right)_j\right|\right)\right)\\
&\leq \PP\left(\|(C_{J,J})^{-1}\|_2\|A\|_2\|Z\|_2\geq \frac{\sqrt{nq}}{5}\left(M_3 q^{-c_2} -\frac{\lambda q|J|}{2nq M_2}\right)\right).
\end{align*}
Let us bound $\|A\|_{2}$.
Observe that
\begin{align}\label{eq:calculnorme2A}
& \left\|\left[\left\{\left(\widehat{\Sigma}^{-1}-\Sigma^{-1}\right)\otimes X'\right\}\left\{(\Sigma^{1/2})'\otimes\textrm{Id}\right\}\right]_{J,\bullet}\right\|_2\nonumber\\
&=\rho\left(\left[\left(\widehat{\Sigma}^{-1}-\Sigma^{-1}\right)\Sigma\left(\widehat{\Sigma}^{-1}-\Sigma^{-1}\right)\otimes (X'X)\right]_{J,J}\right)^{1/2}\nonumber\\
&\leq \rho\left(\left(\widehat{\Sigma}^{-1}-\Sigma^{-1}\right)\Sigma\left(\widehat{\Sigma}^{-1}-\Sigma^{-1}\right)\right)^{1/2}\lambdaMax\left(X'X\right)^{1/2}\nonumber\\
&\leq \rho\left(\widehat{\Sigma}^{-1}-\Sigma^{-1}\right)\lambdaMax(\Sigma)^{1/2}\lambdaMax\left(X'X\right)^{1/2},\nonumber\\
\end{align}
where the first inequality comes from Theorem 4.3.15 of \cite{Horn:Johnson:1985}. Hence, by \ref{th2(v)}, \ref{th2(viii)} and \ref{th2(ix)}
\begin{equation}\label{eq:norme2_A}
\|A\|_{2}=\frac{1}{\sqrt{nq}}\left\|\left[\left\{\left(\widehat{\Sigma}^{-1}-\Sigma^{-1}\right)\otimes X'\right\}\left\{(\Sigma^{1/2})'\otimes\textrm{Id}\right\}\right]_{J,\bullet}\right\|_2
=O_P(q^{-1/2}(nq)^{-1/2}).
\end{equation}
By (\ref{eq:lambda_J_sqrt_nq}), (\ref{eq:CJJ-1:bound}) and (\ref{eq:norme2_A}), it is enough to prove that
$$
\PP\left(\sum_{k=1}^{nq} Z_k^2\geq nq\; n\; q^{-2c_2}\right)\to 0,\textrm { as } n\to\infty.
$$
The result follows from the Markov inequality and the first condition of \ref{assum:lambda_nq}. 

Let us now study the third term in the r.h.s of (\ref{eq:An_tilde_bound}). Observe that
\begin{align}\label{eq:WJ}
&W_J =\frac{1}{\sqrt{nq}}\left[\left(\Sigma^{-1/2}\otimes X'\right)
\left((\Sigma^{-1/2})'\otimes\textrm{Id}_{\rset^n}\right)\textrm{Vec}(E)\right]_J\nonumber\\
&\stackrel{d}{=}\frac{1}{\sqrt{nq}}\left[\left(\Sigma^{-1}\otimes X'\right)\left((\Sigma^{1/2})'\otimes\textrm{Id}_{\rset^n}\right)\right]_{J,\bullet}Z=:A_1 Z,
\end{align}
where $Z$ is a centered Gaussian random vector having a covariance matrix equal to identity and
\begin{equation}\label{eq:def_A1}
A_1=\frac{1}{\sqrt{nq}}\left[\left(\Sigma^{-1}\otimes X'\right)\left((\Sigma^{1/2})'\otimes\textrm{Id}_{\rset^n}\right)\right]_{J,\bullet}.
\end{equation}
Using (\ref{eq:Cauchy-Schwarz}), we get for all $j$ in $J$, for all $\gamma$ in $\rset$ and all $|J|\times |J|$ matrix $D$,
\begin{equation}\label{eq:D_WJ}
\PP\left(\left|\left(D\;W_J\right)_j\right|\geq \gamma\right)
=\PP\left(\left|\left(DA_1Z\right)_j\right|\geq \gamma\right)
\leq\PP\left(\|D\|_2\;\|A_1\|_2\; \|Z\|_2\geq \gamma\right),
\end{equation}
where $A_1$ is defined in (\ref{eq:def_A1}) and $Z$ is a centered Gaussian random vector having a covariance matrix equal to identity. Hence, for all $j$ in $J$,
\begin{align*}
&\PP\left(\left|\left(\left((\widetilde{C}_{J,J})^{-1}-(C_{J,J})^{-1}\right)W_J\right)_j\right|
\geq\frac{\sqrt{nq}}{5}\left(|\mathcal{B}_j|-\frac{\lambda}{2nq}\left|\left((C_{J,J})^{-1}\textrm{sign}(\mathcal{B}_J)\right)_j\right|\right)\right)\\
&\leq \PP\left(\left\|(\widetilde{C}_{J,J})^{-1}-(C_{J,J})^{-1}\right\|_2\;\|A_1\|_2\; \|Z\|_2\geq\frac{\sqrt{nq}}{5}\left(M_3 q^{-c_2} -\frac{\lambda q|J|}{2nq M_2}\right)\right).
\end{align*}
Let us now bound $\|A_1\|_2$. Note that
\begin{multline*}
 \left\|\left[\left(\Sigma^{-1}\otimes X'\right)\left((\Sigma^{1/2})'\otimes\textrm{Id}_{\rset^n}\right)\right]_{J,\bullet}\right\|_2
=\left\|\left[\left(\Sigma^{-1/2}\otimes X'\right)\right]_{J,\bullet}\right\|_2\\
=\rho\left(\left[\Sigma^{-1}\otimes (X'X)\right]_{J,J}\right)^{1/2}\leq \rho\left(\left[\Sigma^{-1}\otimes (X'X)\right]\right)^{1/2} 
\leq\lambdaMax(\Sigma^{-1})^{1/2}\lambdaMax(X'X)^{1/2},
\end{multline*}
where the first inequality comes from Theorem 4.3.15 of \cite{Horn:Johnson:1985}. Hence, by \ref{th2(v)} and \ref{th2(vii)},
\begin{equation}\label{eq:norme2_A1}
\|A_1\|_2\leq \frac{1}{nq}\lambdaMax(\Sigma^{-1})^{1/2}\lambdaMax(X'X)^{1/2}=O_P(q^{-1/2}).
\end{equation}
By (\ref{eq:lambda_J_sqrt_nq}), (\ref{eq:norm2_Ctilde_JJ-1_-C_JJ-1}) and  (\ref{eq:norme2_A1}) it is thus enough to prove that
$$
\PP\left(\sum_{k=1}^{nq}Z_k^2\geq nq\; n\; q^{-2c_2}\right)\to 0,\textrm{ as } n\to\infty.
$$
The result follows from the Markov inequality and the first condition of \ref{assum:lambda_nq}.

Let us now study the fourth term in the r.h.s of (\ref{eq:An_tilde_bound}). 
By (\ref{eq:B_WJ-WJtilde}), for all $j$ in $J$,
\begin{align*}
&\PP\left(\left|\left(\left((\widetilde{C}_{J,J})^{-1}-C_{J,J})^{-1}\right)\left(\widetilde{W}_J-W_J\right)\right)_j\right|
\geq\frac{\sqrt{nq}}{5}\left(|\mathcal{B}_j|-\frac{\lambda}{2nq}\left|\left((C_{J,J})^{-1}\textrm{sign}(\mathcal{B}_J)\right)_j\right|\right)\right)\\
&\leq \PP\left(\left\|(\widetilde{C}_{J,J})^{-1}-C_{J,J})^{-1}\right\|_2\; \|A\|_2\; \|Z\|_2
\geq \frac{\sqrt{nq}}{5}\left(M_3 q^{-c_2} -\frac{\lambda q|J|}{2nq M_2}\right)\right),
\end{align*}
where $A$ is defined in (\ref{eq:def_A}).

By (\ref{eq:lambda_J_sqrt_nq}), (\ref{eq:norm2_Ctilde_JJ-1_-C_JJ-1}) and (\ref{eq:norme2_A}), it is thus enough to prove that
$$
\PP\left(\sum_{k=1}^{nq}Z_k^2\geq (nq)\; n^2\; q^{1-2c_2}\right)\to 0,\textrm{ as } n\to\infty.
$$
The result follows from the Markov inequality and the fact that $c_2<1/2$.

Let us now study $\PP(\widetilde{B}_n)$. By definition of $\widetilde{B}_n$, we get that
$$
\PP(\widetilde{B}_n^c)=\PP\left(\left\{\left|\widetilde{C}_{J^c,J}(\widetilde{C}_{J,J})^{-1}\widetilde{W}_J-\widetilde{W}_{J^c}\right|\geq\frac{\lambda}{2\sqrt{nq}}\left(1-|\widetilde{C}_{J^c,J}(\widetilde{C}_{J,J})^{-1}\textrm{sign}(\mathcal{B}_J)|\right)\right\}\right).
$$
Observe that
\begin{align*}
\widetilde{C}_{J^c,J}(\widetilde{C}_{J,J})^{-1}\widetilde{W}_J-\widetilde{W}_{J^c}&=C_{J^c,J}(C_{J,J})^{-1}W_J-W_{J^c}\\
&+C_{J^c,J}(C_{J,J})^{-1}\left(\widetilde{W}_J-W_J\right)\\
&+C_{J^c,J}\left((\widetilde{C}_{J,J})^{-1}-(C_{J,J})^{-1}\right)W_J\\
&+C_{J^c,J}\left((\widetilde{C}_{J,J})^{-1}-(C_{J,J})^{-1}\right)\left(\widetilde{W}_J-W_J\right)\\
&+\left(\widetilde{C}_{J^c,J}- C_{J^c,J}\right)(C_{J,J})^{-1}W_J\\
&+\left(\widetilde{C}_{J^c,J}- C_{J^c,J}\right)(C_{J,J})^{-1}\left(\widetilde{W}_J-W_J\right)\\
&+\left(\widetilde{C}_{J^c,J}- C_{J^c,J}\right)\left((\widetilde{C}_{J,J})^{-1}-(C_{J,J})^{-1}\right)W_J\\
&+\left(\widetilde{C}_{J^c,J}- C_{J^c,J}\right)\left((\widetilde{C}_{J,J})^{-1}-(C_{J,J})^{-1}\right)\left(\widetilde{W}_J-W_J\right)\\
&+W_{J^c}-\widetilde{W}_{J^c}.
\end{align*}
Moreover,
\begin{align*}
\widetilde{C}_{J^c,J}(\widetilde{C}_{J,J})^{-1}\textrm{sign}(\mathcal{B}_J)&=C_{J^c,J}(C_{J,J})^{-1}\textrm{sign}(\mathcal{B}_J)\\
&+C_{J^c,J}\left((\widetilde{C}_{J,J})^{-1}-(C_{J,J})^{-1}\right)\textrm{sign}(\mathcal{B}_J)\\
&+\left(\widetilde{C}_{J^c,J}-C_{J^c,J}\right)(C_{J,J})^{-1}\textrm{sign}(\mathcal{B}_J)\\
&+\left(\widetilde{C}_{J^c,J}-C_{J^c,J}\right)\left((\widetilde{C}_{J,J})^{-1}-(C_{J,J})^{-1}\right)\textrm{sign}(\mathcal{B}_J).
\end{align*}
By \ref{eq:irrep} and the triangle inequality, we obtain that
\begin{align}\label{eq:Bn_tilde_bound}
&\PP(\widetilde{B}_n^c)\leq\PP\left(\left|C_{J^c,J}(C_{J,J})^{-1}W_J-W_{J^c}\right|\geq\frac{\lambda}{24\sqrt{nq}}\eta\right)\nonumber\\
&+\PP\left(\left|C_{J^c,J}(C_{J,J})^{-1}\left(\widetilde{W}_J-W_J\right)\right|\geq\frac{\lambda}{24\sqrt{nq}}\eta\right)\nonumber\\
 &+\PP\left(\left\{\left|C_{J^c,J}\left((\widetilde{C}_{J,J})^{-1}-(C_{J,J})^{-1}\right)W_J\right|\geq\frac{\lambda}{24\sqrt{nq}}\eta\right\}\right)\nonumber\\
 &+\PP\left(\left\{\left|C_{J^c,J}\left((\widetilde{C}_{J,J})^{-1}-(C_{J,J})^{-1}\right)\left(\widetilde{W}_J-W_J\right)\right|\geq\frac{\lambda}{24\sqrt{nq}}\eta\right\}\right)\nonumber\\
 &+\PP\left(\left\{\left|\left(\widetilde{C}_{J^c, J}- C_{J^c,J}\right)(C_{J,J})^{-1}W_J\right|\geq\frac{\lambda}{24\sqrt{nq}}\eta\right\}\right)\nonumber\\
 &+\PP\left(\left\{\left|\left(\widetilde{C}_{J^c ,J}- C_{J^c,J}\right)(C_{J,J})^{-1}\left(\widetilde{W}_J-W_J\right)\right|\geq\frac{\lambda}{24\sqrt{nq}}\eta\right\}\right)\nonumber\\
 &+\PP\left(\left\{\left|\left(\widetilde{C}_{J^c, J}- C_{J^c,J}\right)\left((\widetilde{C}_{J,J})^{-1}-(C_{J,J})^{-1}\right)W_J\right|\geq\frac{\lambda}{24\sqrt{nq}}\eta\right\}\right)\nonumber\\
 &+\PP\left(\left\{\left|\left(\widetilde{C}_{J^c, J}- C_{J^c,J}\right)\left((\widetilde{C}_{J,J})^{-1}-(C_{J,J})^{-1}\right)\left(\widetilde{W}_J-W_J\right)\right|\geq\frac{\lambda}{24\sqrt{nq}}\eta\right\}\right)\nonumber\\
 &+\PP\left(\left\{\left|W_{J^c}-\widetilde{W}_{J^c}\right|\geq\frac{\lambda}{24\sqrt{nq}}\eta\right\}\right)\nonumber\\
 &+\PP\left(\left\{\left|C_{J^c,J}\left((\widetilde{C}_{J,J})^{-1}-(C_{J,J})^{-1}\right)\textrm{sign}(\mathcal{B}_J)\right|\geq\frac{\eta}{12}\right\}\right)\nonumber\\
 &+\PP\left(\left\{\left|\left(\widetilde{C}_{J^c,J}-C_{J^c,J}\right)(C_{J,J})^{-1}\textrm{sign}(\mathcal{B}_J)\right|\geq\frac{\eta}{12}\right\}\right)\nonumber\\
 &+\PP\left(\left\{\left|\left(\widetilde{C}_{J^c,J}-C_{J^c,J}\right)\left((\widetilde{C}_{J,J})^{-1}-(C_{J,J})^{-1}\right)\textrm{sign}(\mathcal{B}_J)\right|\geq\frac{\eta}{12}\right\}\right).\nonumber\\
\end{align}

The first term in the r.h.s of (\ref{eq:Bn_tilde_bound}) tends to 0 by (\ref{eq:Bnc:limit}).

Let us now study the second term of (\ref{eq:Bn_tilde_bound}). 
By (\ref{eq:B_WJ-WJtilde}), we get that for all $j$ in $J^c$,
\begin{align*}
&\PP\left(\left|\left(C_{J^c,J}(C_{J,J})^{-1}\left(\widetilde{W}_J-W_J\right)\right)_j\right|
\geq\frac{\lambda}{24\sqrt{nq}}\eta\right)\\
&\leq \PP\left(\left\|C_{J^c,J}\right\|_2\|(C_{J,J})^{-1}\|_2\|A\|_2\|Z\|_2\geq \frac{\lambda}{24\sqrt{nq}}\eta\right).
\end{align*}
Observe that
\begin{align}\label{eq:n2_Cj_jc}
\left\|C_{J^c,J}\right\|_2 &= \rho\left(\frac{(\mathcal{X}_{\bullet,J^c})'\mathcal{X}_{\bullet,J}}{nq}\frac{(\mathcal{X}_{\bullet,J})'\mathcal{X}_{\bullet,J^c}}{nq}\right)^{1/2}
=\frac{1}{nq}\left\|(\mathcal{X}_{\bullet,J^c})'\mathcal{X}_{\bullet,J}\right\|_2\leq \frac{\left\|(\mathcal{X}_{\bullet,J^c})'\right\|_2}{\sqrt{nq}}\frac{\left\|\mathcal{X}_{\bullet,J}\right\|_2}{\sqrt{nq}}\nonumber\\
&\leq\rho\left(\frac{(\mathcal{X}_{\bullet,J^c})'\mathcal{X}_{\bullet,J^c}}{nq}\right)^{1/2}
\rho\left(\frac{(\mathcal{X}_{\bullet,J})'\mathcal{X}_{\bullet,J}}{nq}\right)^{1/2} \nonumber \\
&=\rho(C_{J^cJ^c})^{1/2}\rho(C_{J,J})^{1/2}\leq\rho(C)=\frac{\lambdaMax(\Sigma^{-1})}{q}\lambdaMax(X'X/n)=O_P(q^{-1}).
\end{align}
In (\ref{eq:n2_Cj_jc}) the last inequality and  the fourth equality come from Theorem 4.3.15 of \cite{Horn:Johnson:1985} and (\ref{eq:C_C_tilde}), respectively. The last equality 
comes from \ref{th2(v)} and \ref{th2(vii)}.
%

By (\ref{eq:CJJ-1:bound}), (\ref{eq:norme2_A}) and (\ref{eq:n2_Cj_jc}), it is thus enough to prove that
$$
\PP\left(\sum_{k=1}^{nq} Z_k^2\geq \left((nq)^{1/2} \sqrt{q}\frac{\lambda}{\sqrt{nq}}\right)^2\right)=\PP\left(\sum_{k=1}^{nq} Z_k^2\geq (nq)
\left(\frac{\lambda}{\sqrt{n}}\right)^2\right)\to 0,\textrm{ as } n\to\infty,
$$
which holds true by the second condition of \ref{assum:lambda_nq} and Markov inequality.  Hence, the second term of (\ref{eq:Bn_tilde_bound}) tends to zero as $n$ tends to infinity.

Let us now study the third term of (\ref{eq:Bn_tilde_bound}). 
By (\ref{eq:D_WJ}), we get that for all $j$ in $J^c$,
\begin{align*}
&\PP\left(\left|\left(C_{J^c,J}\left((\widetilde{C}_{J,J})^{-1}-(C_{J,J})^{-1}\right)W_J\right)_j\right|\geq\frac{\lambda}{24\sqrt{nq}}\eta\right)\\
&\leq\PP\left(\left\|C_{J^c,J}\right\|_2\|(\widetilde{C}_{J,J})^{-1}-(C_{J,J})^{-1}\|_2\|A_1\|_2\|Z\|_2\geq \frac{\lambda}{24\sqrt{nq}}\eta\right).
\end{align*}
By (\ref{eq:norm2_Ctilde_JJ-1_-C_JJ-1}), (\ref{eq:norme2_A1}) and (\ref{eq:n2_Cj_jc}), it is thus enough to prove that
$$
\PP\left(\sum_{k=1}^{nq} Z_k^2\geq \left((nq)^{1/2} \sqrt{q}\frac{\lambda}{\sqrt{nq}}\right)^2\right)=\PP\left(\sum_{k=1}^{nq} Z_k^2\geq (nq)
\left(\frac{\lambda}{\sqrt{n}}\right)^2\right)\to 0,\textrm{ as } n\to\infty,
$$
which holds true by  the second condition of \ref{assum:lambda_nq} and Markov inequality. Hence, the third term of (\ref{eq:Bn_tilde_bound}) tends to zero as $n$ tends to infinity.

Let us now study the fourth term of (\ref{eq:Bn_tilde_bound}). By (\ref{eq:B_WJ-WJtilde}), it amounts to prove that
$$
\PP\left(\left\|C_{J^c,J}\right\|_2\|(\widetilde{C}_{J,J})^{-1}-(C_{J,J})^{-1}\|_2\|A\|_2\|Z\|_2\geq \frac{\lambda}{24\sqrt{nq}}\eta\right)\to 0,\textrm{ as } n\to\infty.
$$
By (\ref{eq:n2_Cj_jc}), (\ref{eq:norm2_Ctilde_JJ-1_-C_JJ-1}) and (\ref{eq:norme2_A}) it is enough tho prove that
$$
\PP\left(\sum_{k=1}^{nq} Z_k^2\geq(nq) \;(nq)\left(\frac{\lambda}{\sqrt{n}}\right)^2\right)\to 0,\textrm{ as } n\to\infty,
$$
which holds true by the second condition of \ref{assum:lambda_nq}. Hence, the fourth term of (\ref{eq:Bn_tilde_bound}) tends to zero as $n$ tends to infinity.

Let us now study the fifth term of (\ref{eq:Bn_tilde_bound}). 
By (\ref{eq:D_WJ}), proving that the fifth term of (\ref{eq:Bn_tilde_bound}) tends to 0 amounts to proving that
$$
\PP\left(\left\|C_{J^c,J}-\widetilde{C}_{J^c, J}\right\|_2\|(C_{J,J})^{-1}\|_2\|A_1\|_2\|Z\|_2\geq \frac{\lambda}{24\sqrt{nq}}\eta\right)\to 0,\textrm{ as } n\to\infty.
$$
Let us now bound $\|C_{J^c,J}-\widetilde{C}_{J^c, J}\|_2$.
\begin{align}\label{eq:norme2_CJcJ-CJcJ_tilde}
&\left\|C_{J^c,J}-\widetilde{C}_{J^c ,J}\right\|_2=\left\|\left(C-\widetilde{C}\right)_{J^c, J}\right\|_2
=\rho\left(\left(C-\widetilde{C}\right)_{J^c, J}\left(C-\widetilde{C}\right)_{J^c, J}\right)^{1/2}\nonumber\\
&\leq\left\|\left(C-\widetilde{C}\right)_{J^c, J}\left(C-\widetilde{C}\right)_{J^c, J}\right\|_{\infty}^{1/2}
\leq \left\|\left(C-\widetilde{C}\right)\left(C-\widetilde{C}\right)\right\|_{\infty}^{1/2}
\leq\left\|C-\widetilde{C}\right\|_{\infty}\nonumber\\
&=\frac{1}{q}\left\|\Sigma^{-1}-\widehat{\Sigma}^{-1}\right\|_{\infty}\;\left\|\frac{X'X}{n}\right\|_{\infty}=O_P(q^{-1}(nq)^{-1/2}),
\end{align}
as $n$ tends to infinity, where the last equality comes from \ref{th2(v)} and \ref{th2(ix)}.

 By (\ref{eq:CJJ-1:bound}), (\ref{eq:norme2_A1}) and (\ref{eq:norme2_CJcJ-CJcJ_tilde}), to prove that the fifth term of (\ref{eq:Bn_tilde_bound}) tends to zero as $n$ tends to infinity, it is enough
to prove that
$$
\PP\left(\sum_{k=1}^{nq} Z_k^2\geq nq\left(\frac{\lambda}{\sqrt{n}}\right)^2\right)\to 0,\textrm{ as } n\to\infty,
$$
which holds using Markov's inequality and the second condition of \ref{assum:lambda_nq}.

 
Using similar arguments as those used for proving that the second, third and fourth terms of (\ref{eq:Bn_tilde_bound}) tend to zero, 
we get that the sixth, seventh and eighth terms of (\ref{eq:Bn_tilde_bound}) tend to zero, as $n$ tends to infinity, by replacing (\ref{eq:n2_Cj_jc}) by (\ref{eq:norme2_CJcJ-CJcJ_tilde}).


 Let us now study the ninth term of (\ref{eq:Bn_tilde_bound}). 
Replacing $J$ by $J^c$ in (\ref{eq:diff_Wn}), (\ref{eq:def_A}), (\ref{eq:B_WJ-WJtilde}, (\ref{eq:calculnorme2A}) and (\ref{eq:norme2_A}) in order to prove that  the ninth term of 
(\ref{eq:Bn_tilde_bound}) tends to 0 it is enough to prove that 
$$
\PP\left(\sum_{k=1}^{nq} Z_k^2\geq nq\left(\frac{\lambda}{\sqrt{n}}\right)^2\right)\to 0,\textrm{ as } n\to\infty,
$$
which holds using Markov's inequality and the second condition of \ref{assum:lambda_nq}.

Let us now study the tenth term of (\ref{eq:Bn_tilde_bound}). Using the same idea as the one used for proving (\ref{eq:U_s_j}), we get that
\begin{align*}
&\PP\left(\left\{\left|C_{J^c,J}\left((\widetilde{C}_{J,J})^{-1}-(C_{J,J})^{-1}\right)\textrm{sign}(\mathcal{B}_J)\right|\geq\frac{\eta}{12}\right\}\right)\\
&\leq\PP\left(\sqrt{|J|}\left\|C_{J^c,J}\right\|_2\; \left\|(\widetilde{C}_{J,J})^{-1}-(C_{J,J})^{-1}\right\|_2\geq\frac{\eta}{12}\right),
\end{align*}
which tends to zero as $n$ tends to infinity by \ref{th1(iii)},  (\ref{eq:norm2_Ctilde_JJ-1_-C_JJ-1}), (\ref{eq:n2_Cj_jc}) and the fact that $c_1<1/2$.

Let us now study the eleventh term of (\ref{eq:Bn_tilde_bound}). Using the same idea as the one used for proving (\ref{eq:U_s_j}), we get that
\begin{align*}
&\PP\left(\left\{\left|\left(\widetilde{C}_{J^c,J}-C_{J^c,J}\right)(C_{J,J})^{-1}\textrm{sign}(\mathcal{B}_J)\right|\geq\frac{\eta}{12}\right\}\right)\\
&\leq \PP\left(\sqrt{|J|}\left\|\widetilde{C}_{J^c,J}-C_{J^c,J}\right\|_2 \|(C_{J,J})^{-1}\|_2\geq\frac{\eta}{12}\right),
\end{align*}
which tends to zero as $n$ tends to infinity by \ref{th1(iii)}, (\ref{eq:CJJ-1:bound}) and (\ref{eq:norme2_CJcJ-CJcJ_tilde}) and the fact that $c_1<1/2$.

Finally, the twelfth term of (\ref{eq:Bn_tilde_bound}) can be bounded as follows:
\begin{align*}
&\PP\left(\left\{\left|\left(\widetilde{C}_{J^c,J}-C_{J^c,J}\right)\left((\widetilde{C}_{J,J})^{-1}-(C_{J,J})^{-1}\right)\textrm{sign}(\mathcal{B}_J)\right|\geq\frac{\eta}{12}\right\}\right)\\
&\leq \PP\left(\sqrt{|J|} \left\|\widetilde{C}_{J^c,J}-C_{J^c,J}\right\|_2\left\|(\widetilde{C}_{J,J})^{-1}-(C_{J,J})^{-1}\right\|_2\geq\frac{\eta}{12}\right),
\end{align*}
which tends to zero as $n$ tends to infinity by \ref{th1(iii)}, (\ref{eq:norm2_Ctilde_JJ-1_-C_JJ-1}) and (\ref{eq:norme2_CJcJ-CJcJ_tilde}) and the fact that $c_1<1/2$.
\end{proof}

\begin{proof}[Proof of Proposition \ref{prop_IC}]
Observe that
\begin{equation}\label{eq:Sigma-1}
\Sigma^{-1} =\left(
\begin{matrix}
1 & -\phi_1 & 0 & \cdots & 0\\
-\phi_1 & 1+\phi_1^2 & -\phi_1 & \cdots & 0 \\
0 &-\phi_1 & \ddots & \ddots & \vdots \\
\vdots & \vdots &  \ddots & 1+\phi_1^2 & -\phi_1 \\
0 & 0 & \cdots  &  -\phi_1  &  1 &
\end{matrix}
\right).
\end{equation}
Let 
$
S=\mathcal{X'}\mathcal{X}=\Sigma^{-1} \otimes X'X.
$
Then,
\begin{equation*}
S_{i,j} =\left\{
\begin{tabular}{cl}
$n_{r_i+1}$ & if  $j=i$ and  $k_i\in\{0,q-1\}$\\
$(1+\phi_1^2)n_{r_i+1}$ & if  $j=i$ and  $k_i\notin\{ 0,q-1\}$ \\
$-\phi_1 n_{r_i+1}$ & if $j=i+p$ or if  $j=i-p$ \\
0 & \textrm{otherwise} 
\end{tabular}
\right.,
\end{equation*}
where $i-1=(p-1)k_i+r_i$ corresponds to the Euclidean division of $(i-1)$ by $(p-1)$.

In order to prove \ref{eq:irrep}, it is enough to prove that 
$$
\|S_{J^c,J} (S_{J,J})^{-1}\|_{\infty}\leq 1-\eta,
$$
where $\eta\in (0,1)$.

Since for all $j$, $(j-p)\in J^c$ or $(j+p)\in J^c$,
$$
\|S_{J^c,J}\|_\infty=\nu |\phi_1|.
$$
Let $A=S_{J,J}$. Since $A=(a_{i,j})$ is a diagonally dominant matrix, then, by Theorem 1 of \cite{varah:1975},
$$
\|A^{-1}\|_{\infty}\leq\frac{1}{\min_k (a_{k,k}-\displaystyle{\sum_{\stackrel{1\leq j\leq |J|}{j \neq k}}a_{k,j}})}.
$$
Using that for all $j$, $(j-p)\in J^c$ or $(j+p)\in J^c$,
$$
\sum_{\stackrel{1\leq j\leq |J|}{j \neq k}} a_{k,j}\leq \nu |\phi_1|.
$$
If $k\in J$ then $k>p$ and $k<pq -p$. Thus,
$$
a_{k,k}\geq \nu(1+\phi_1^2).
$$
Hence,
$$
\|A^{-1}\|_{\infty}\leq\frac{1}{\nu(1 + \phi_1^2- |\phi_1|)}
$$
and
$$
\|S_{J^c,J}(S_{J,J})^{-1}\|_\infty \leq \|S_{J^c,J}\|_\infty \|(S_{J,J})^{-1}\|_\infty \leq \frac{ |\phi_1| }{1 + \phi_1^2-|\phi_1|}.
$$
Since $|\phi_1|<1$, the strong Irrepresentability Condition holds when 
$$
|\phi_1| \leq (1-\eta)(1 +|\phi_1|^2 - |\phi_1|),
$$
which is true for a small enough $\eta$.
\end{proof}

\begin{proof}[Proof of Proposition \ref{prop:cond_th_AR}]
Since $|\phi_1|<1$,
$$
\|\Sigma^{-1}\|_\infty\leq |\phi_1|+|1+\phi_1^2|\leq 3,
$$
which gives \ref{th2(vii)} by Theorem 5.6.9 of \cite{Horn:Johnson:1985}.

Observe that
$$
\|\Sigma\|_{\infty}\leq \frac{1}{1-\phi_1^2}\left(1+2\sum_{h=1}^{q-1}|\phi_1|^h\right)\leq\frac{1}{1-\phi_1^2}\left(1+\frac{2}{1-|\phi_1|}\right)=\frac{3-|\phi_1|}{1-\phi_1^2}
\leq \frac{3}{1-\phi_1^2},
$$
which gives \ref{th2(viii)} by Theorem 5.6.9 of \cite{Horn:Johnson:1985}.

Since $\widehat{\Sigma}^{-1}$ has the same expression as $\Sigma^{-1}$ defined in (\ref{eq:Sigma-1}) except that $\phi_1$ is replaced by $\widehat{\phi_1}$ defined in 
(\ref{eq:phi_1_hat}), we get that
$$
\left\|\Sigma^{-1}-\widehat{\Sigma}^{-1}\right\|_{\infty}\leq 2\left|\phi_1-\widehat{\phi}_1\right|+\left(\phi_1-\widehat{\phi}_1\right)^2,
$$
which implies Assumption \ref{th2(ix)} of Theorem \ref{th2} by Lemma \ref{lem:AR:estim}.


Let us now check Assumption \ref{th2(x)} of Theorem \ref{th2}. Since, by Theorem 5.6.9 of \cite{Horn:Johnson:1985},
$\rho(\Sigma-\widehat{\Sigma})\leq \|\Sigma-\widehat{\Sigma}\|_{\infty}$, it is enough to prove that
$$
\left\|\Sigma-\widehat{\Sigma}\right\|_{\infty}=O_P((nq)^{-1/2}),\textrm{ as } n\to\infty.
$$
Observe that
\begin{align*}
&\left\|\Sigma-\widehat{\Sigma}\right\|_{\infty}\leq \left|\frac{1}{1-\phi_1^2}-\frac{1}{1-\widehat{\phi}_1^2}\right|
+2\sum_{h=1}^{q-1}\left|\frac{\phi_1^h}{1-\phi_1^2}-\frac{\widehat{\phi}_1^h}{1-\widehat{\phi}_1^2}\right|\\
&\leq \left|\frac{\phi_1^2-\widehat{\phi}_1^2}{(1-\phi_1^2)(1-\widehat{\phi}_1^2)}\right|+2\sum_{h=1}^{q-1}\left|\frac{\phi_1^h-\widehat{\phi}_1^h}{1-\phi_1^2}\right|
+2\sum_{h=1}^{q-1}\left|\widehat{\phi}_1^h\left(\frac{1}{1-\phi_1^2}-\frac{1}{1-\widehat{\phi}_1^2}\right)\right|\\
&\leq \left|\frac{(\phi_1-\widehat{\phi}_1)(\phi_1+\widehat{\phi}_1)}{(1-\phi_1^2)(1-\widehat{\phi}_1^2)}\right|+2\sum_{h=1}^{q-1}\left|\frac{\phi_1^h-\widehat{\phi}_1^h}{1-\phi_1^2}\right|
+2\sum_{h=1}^{q-1}\left|\left(\widehat{\phi}_1^h-\phi_1^h\right)\left(\frac{1}{1-\phi_1^2}-\frac{1}{1-\widehat{\phi}_1^2}\right)\right|\\
&\hspace{50mm}+2\sum_{h=1}^{q-1}\left|\phi_1^h \left(\frac{1}{1-\phi_1^2}-\frac{1}{1-\widehat{\phi}_1^2}\right)\right|\\
&\leq \left|\frac{(\phi_1-\widehat{\phi}_1)(\phi_1+\widehat{\phi}_1)}{(1-\phi_1^2)(1-\widehat{\phi}_1^2)}\right|\left(1+\frac{2}{1-|\phi_1|}\right)
+2\left(\frac{1}{|1-\phi_1^2|}+\left|\frac{(\phi_1-\widehat{\phi}_1)(\phi_1+\widehat{\phi}_1)}{(1-\phi_1^2)(1-\widehat{\phi}_1^2)}\right|\right)
\sum_{h=1}^{q-1}\left|\widehat{\phi}_1^h-\phi_1^h\right|.
\end{align*}
Moreover,
\begin{align*}
&\sum_{h=1}^{q-1}\left|\widehat{\phi}_1^h-\phi_1^h\right|\leq \left|\widehat{\phi}_1-\phi_1\right|\sum_{h=1}^{q-1}\sum_{k=0}^{h-1}
|\phi_1|^k |\widehat{\phi}_1|^{h-k-1} \leq \left|\widehat{\phi}_1-\phi_1\right|\left(\frac{1-|\widehat{\phi}_1|^{q-1}}{1-|\widehat{\phi}_1|}\right)
\left(\frac{1-|\phi_1|^{q-1}}{1-|\phi_1|}\right)\\
&\leq \left|\widehat{\phi}_1-\phi_1\right|\left(\frac{1}{1-|\widehat{\phi}_1|}\right)
\left(\frac{1}{1-|\phi_1|}\right).
\end{align*}
Thus,  by Lemma \ref{lem:AR:estim},
$$
\left\|\Sigma-\widehat{\Sigma}\right\|_{\infty}=O_P((nq)^{-1/2}),
$$
which implies Assumption \ref{th2(x)} of Theorem \ref{th2}.
\end{proof}

\begin{proof}[Proof of Lemma \ref{lem:AR:estim}]
In the following, for notational simplicity, $q=q_n$.
Observe that
$$
\sqrt{nq}\widehat{\phi}_1=\frac{\frac{1}{\sqrt{nq}}\sum_{i=1}^n\sum_{\ell=2}^q \widehat{E}_{i,\ell} \widehat{E}_{i,\ell-1}}{\frac{1}{nq}\sum_{i=1}^n\sum_{\ell=1}^{q-1} \widehat{E}_{i,\ell}^2}.
$$
By (\ref{eq: Ehat}),
\begin{align}
\sum_{i=1}^n\sum_{\ell=2}^q \widehat{E}_{i,\ell} \widehat{E}_{i,\ell-1}&=\sum_{\ell=2}^q (\widehat{E}_{\bullet,\ell})'\widehat{E}_{\bullet,\ell-1}
=\sum_{\ell=2}^q (\Pi E_{\bullet,\ell})' (\Pi E_{\bullet,\ell-1})\nonumber\\
&=\sum_{\ell=2}^q (\phi_1 \Pi E_{\bullet,\ell-1} +\Pi Z_{\bullet,\ell})' (\Pi E_{\bullet,\ell-1})\label{eq:phi1_hat_def_E}\\
&=\phi_1\sum_{\ell=1}^{q-1}(\Pi E_{\bullet,\ell})'(\Pi E_{\bullet,\ell})+\sum_{\ell=2}^q (\Pi Z_{\bullet,\ell})'(\Pi E_{\bullet,\ell-1})\nonumber,
\end{align}
where (\ref{eq:phi1_hat_def_E}) comes from the definition of $(E_{i,t})$.

Hence,
$$
\sqrt{nq}(\widehat{\phi}_1-\phi_1)=\frac{\frac{1}{\sqrt{nq}}\sum_{\ell=2}^q (\Pi Z_{\bullet,\ell})'(\Pi E_{\bullet,\ell-1})}{\frac{1}{nq}\sum_{i=1}^n\sum_{\ell=1}^{q-1} \widehat{E}_{i,\ell}^2}.
$$
In order to prove that $\sqrt{nq}(\widehat{\phi}_1-\phi_1)=O_P(1)$, it is enough to prove that
\begin{equation}\label{eq:den:approx}
\frac{1}{nq}\sum_{i=1}^n\sum_{\ell=1}^{q-1} E_{i,\ell}^2-\frac{1}{nq}\sum_{i=1}^n\sum_{\ell=1}^{q-1} \widehat{E}_{i,\ell}^2=o_P(1),\textrm{ as }
n\to\infty,
\end{equation}
by Lemma \ref{lem:denom} and
\begin{equation}\label{eq:num:approx}
\frac{1}{\sqrt{nq}}\sum_{\ell=2}^q (\Pi Z_{\bullet,\ell})'(\Pi E_{\bullet,\ell-1})=O_P(1),\textrm{ as } n\to\infty.
\end{equation}

Let us first prove (\ref{eq:den:approx}). By (\ref{eq:Ehat_1}), 
$$
\widehat{\mathcal{E}}=\left[\textrm{Id}_{\rset^q}\otimes\Pi\right]\mathcal{E}:= A\mathcal{E}.
$$
Note that
$$
\Cov(\widehat{\mathcal{E}})=A(\Sigma\otimes\textrm{Id}_{\rset^n})A'=\Sigma\otimes\Pi.
$$
Hence, for all $i$
$$
\Var(\widehat{\mathcal{E}}_i)\leq\lambdaMax(\Sigma).
$$
Since the covariance matrix of $\mathcal{E}$ is equal to $\Sigma\otimes\textrm{Id}_{\rset^n}$, for all $i$
$$
\Var(\mathcal{E}_i)\leq\lambdaMax(\Sigma).
$$
By Markov inequality,
\begin{multline*}
\frac{1}{nq}\sum_{i=1}^n\sum_{\ell=1}^{q-1} E_{i,\ell}^2-\frac{1}{nq}\sum_{i=1}^n\sum_{\ell=1}^{q-1} \widehat{E}_{i,\ell}^2
=\frac{1}{nq}\sum_{i=1}^n\sum_{\ell=1}^{q} E_{i,\ell}^2-\frac{1}{nq}\sum_{i=1}^n\sum_{\ell=1}^{q} \widehat{E}_{i,\ell}^2+o_P(1)\\
=\frac{1}{nq}\left(\|\mathcal{E}\|_2^2-\|\widehat{\mathcal{E}}\|_2^2\right)+o_P(1).
\end{multline*}
Observe that
\begin{align*}
&\|\mathcal{E}\|_2^2-\|\widehat{\mathcal{E}}\|_2^2=\|\mathcal{E}\|_2^2-\|A\mathcal{E}\|_2^2=\mathcal{E}'\mathcal{E}-\mathcal{E}'A'A\mathcal{E}
=\mathcal{E}'\left(\textrm{Id}_{\rset^{nq}}-\textrm{Id}_{\rset^q}\otimes\Pi\right)\mathcal{E}\\
&=\mathcal{E}'\left(\textrm{Id}_{\rset^q}\otimes\left(\textrm{Id}_{\rset^n}-\Pi\right)\right)\mathcal{E}=\sum_{i=1}^{pq}\widetilde{\mathcal{E}}_i^2,
\end{align*}
where $\widetilde{\mathcal{E}}=O\mathcal{E}$, where $O$ is an orthogonal matrix. 
Using that
$$
\PE(\widetilde{\mathcal{E}}_i^2)=\Cov(\widetilde{\mathcal{E}})_{i,i}\leq\lambdaMax(\Sigma),
$$
and Markov inequality, we get (\ref{eq:den:approx}).

Let us now prove (\ref{eq:num:approx}). 
By definition of $(E_{i,t})$ and since $|\phi_1|<1$, $\PE[(\Pi Z_{\bullet,\ell})'(\Pi E_{\bullet,\ell-1})]=0$. Moreover,
\begin{align*}
&\PE\left[\left(\sum_{\ell=2}^q (\Pi Z_{\bullet,\ell})'(\Pi E_{\bullet,\ell-1})\right)^2\right]
=\PE\left[\left(\sum_{\ell=2}^q\sum_{i=1}^n \left(\sum_{k=1}^n\Pi_{i,k} Z_{k,\ell}\right)\left(\sum_{j=1}^n\Pi_{i,j} E_{j,\ell-1}\right)\right)^2\right]\\
&=\sum_{2\leq\ell,\ell'\leq q}\;\sum_{1\leq i,j,k,i',j',k'\leq n} \Pi_{i,k}\Pi_{i',k'}\Pi_{i,j}\Pi_{i',j'}\PE\left(Z_{k,\ell}Z_{k',\ell'}E_{j,\ell-1}E_{j',\ell'-1}\right)\\
&=\sum_{2\leq\ell,\ell'\leq q}\;\sum_{1\leq i,j,k,i',j',k'\leq n} \Pi_{i,k}\Pi_{i',k'}\Pi_{i,j}\Pi_{i',j'}\sum_{r,s\geq 0}\phi_1^r \phi_1^s
\PE\left(Z_{k,\ell}Z_{k',\ell'}Z_{j,\ell-1-r}Z_{j',\ell'-1-s}\right),
\end{align*}
since the $(E_{i,t})$ are AR(1) processes with $|\phi_1|<1$. Note that $\PE(Z_{k,\ell}Z_{k',\ell'}Z_{j,\ell-1-r}Z_{j',\ell'-1-s})=0$ except when $\ell=\ell'$, $k=k'$, $j=j'$ and $r=s$.

Thus,
\begin{align*}
&\PE\left[\left(\sum_{\ell=2}^q (\Pi Z_{\bullet,\ell})'(\Pi E_{\bullet,\ell-1})\right)^2\right]
=\sigma^4\left(\sum_{r\geq 0}\phi_1^{2r}\right)\sum_{\ell=2}^q\sum_{1\leq i,j,k,i'\leq n}\Pi_{i,k}\Pi_{i',k}\Pi_{i,j}\Pi_{i',j}\\
&=\frac{q\sigma^4}{1-\phi_1^2}\textrm{Tr}(\Pi)\leq\frac{nq\sigma^4}{1-\phi_1^2},
\end{align*}
where $\textrm{Tr}(\Pi)$ denotes the trace of $\Pi$, which concludes the proof of (\ref{eq:num:approx}) by Markov inequality.

\end{proof}

\section{Technical lemmas}\label{sec:lemmas}



\begin{lemma}\label{lem:proj}
Let $A\in\mathcal{M}_n(\rset)$ and $\Pi$ an orthogonal projection matrix.
For any $j$ in $\{1,\ldots,n\}$  
$$
(A'\Pi A)_{jj}\geq 0.
$$ 
\end{lemma}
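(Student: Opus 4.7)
The plan is very short: exploit the two defining properties of an orthogonal projection matrix, namely symmetry ($\Pi' = \Pi$) and idempotence ($\Pi^2 = \Pi$), to rewrite $A'\Pi A$ as a Gram matrix, whose diagonal entries are manifestly nonnegative.

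More precisely, I would first write
$$
A'\Pi A = A'\Pi^2 A = A'\Pi'\Pi A = (\Pi A)'(\Pi A).
$$
This shows that $A'\Pi A$ is of the form $M'M$ with $M = \Pi A$. Consequently, for any $j \in \{1,\ldots,n\}$, the diagonal entry satisfies
$$
(A'\Pi A)_{jj} = \big((\Pi A)'(\Pi A)\big)_{jj} = \sum_{i=1}^n (\Pi A)_{i,j}^2 = \|(\Pi A)_{\bullet,j}\|_2^2 \geq 0,
$$
which is exactly the claim.

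There is no real obstacle here; the only point to verify is that the two algebraic properties of $\Pi$ used above ($\Pi' = \Pi$ and $\Pi^2 = \Pi$) are precisely what is meant by \emph{orthogonal projection matrix}. Once this is recalled, the proof reduces to the trivial observation that squared Euclidean norms are nonnegative.
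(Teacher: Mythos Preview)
Your proof is correct and follows essentially the same route as the paper: both use $\Pi=\Pi'\Pi$ to rewrite $A'\Pi A=(\Pi A)'(\Pi A)$ and then observe that the diagonal entries of a Gram matrix are nonnegative. The paper phrases the last step as $e_j'(\Pi A)'(\Pi A)e_j\geq 0$ by positive semidefiniteness, while you write it as $\|(\Pi A)_{\bullet,j}\|_2^2\geq 0$; these are the same computation.
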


\begin{proof}[Proof of Lemma \ref{lem:proj}]
Observe that
$$
(A'\Pi A)=A'\Pi'\Pi A=(\Pi A)'(\Pi A),
$$
since $\Pi$ is an orthogonal projection matrix.
Moreover,
$$
(A'\Pi A)_{jj}=e_j'(\Pi A)'(\Pi A)e_j\geq 0,
$$
since $(\Pi A)'(\Pi A)$ is a positive semidefinite symmetric matrix, where $e_j$ is a vector containing null entries except the $j$th entry which is equal to 1.
\end{proof}

\begin{lemma}\label{lem:denom}
Assume that $(E_{1,t})_t$, $(E_{2,t})_t$, ..., $(E_{n,t})_t$ are independent AR(1) processes satisfying:
$$
E_{i,t}-\phi_1 E_{i,t-1}=Z_{i,t},\; \forall i\in\{1,\dots,n\},
$$
where the $Z_{i,t}$'s are zero-mean i.i.d. Gaussian random variables with variance $\sigma^2$ and $|\phi_1|<1$. 
Then,
$$
\frac{1}{nq_n}\sum_{i=1}^n\sum_{\ell=1}^{q_n-1} E_{i,\ell}^2\stackrel{P}{\longrightarrow}\frac{\sigma^2}{1-\phi_1^2},\textrm{ as } n\to\infty.
$$
\end{lemma}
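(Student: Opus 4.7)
The plan is to prove convergence in probability by a second-moment (Chebyshev) argument, exploiting independence across $i$ and the stationarity of each AR(1) row.

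First, under the stationarity assumption on each $(E_{i,t})_t$ (which is standard in this setting given $|\phi_1|<1$), each $E_{i,\ell}$ is centered Gaussian with variance $\gamma(0)=\sigma^2/(1-\phi_1^2)$ and autocovariance $\gamma(h)=\phi_1^{|h|}\sigma^2/(1-\phi_1^2)$. Hence
$$
\PE\!\left[\frac{1}{nq_n}\sum_{i=1}^n\sum_{\ell=1}^{q_n-1} E_{i,\ell}^2\right]=\frac{q_n-1}{q_n}\cdot\frac{\sigma^2}{1-\phi_1^2}\;\longrightarrow\;\frac{\sigma^2}{1-\phi_1^2},
$$
so it suffices to show that the variance of the average tends to $0$.

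Using independence of the rows $(E_{i,t})_t$ across $i$, the variance factorizes:
$$
\Var\!\left[\frac{1}{nq_n}\sum_{i=1}^n\sum_{\ell=1}^{q_n-1} E_{i,\ell}^2\right]=\frac{1}{n^2 q_n^2}\sum_{i=1}^n \Var\!\left[\sum_{\ell=1}^{q_n-1} E_{i,\ell}^2\right].
$$
For a fixed $i$, since $(E_{i,\ell})_\ell$ is a centered stationary Gaussian process, Isserlis' formula gives $\Cov(E_{i,\ell}^2, E_{i,\ell'}^2)=2\gamma(\ell-\ell')^2$. Hence
$$
\Var\!\left[\sum_{\ell=1}^{q_n-1}E_{i,\ell}^2\right]=2\sum_{\ell,\ell'=1}^{q_n-1}\gamma(\ell-\ell')^2\leq 2(q_n-1)\sum_{h\in\mathbb{Z}}\gamma(h)^2,
$$
and the right-hand sum is finite because $\gamma(h)^2$ decays geometrically (as $\phi_1^{2|h|}$). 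Therefore the above variance is $O(q_n)$ uniformly in $i$, and the total variance of the average is $O\big((nq_n)^{-1}\big)$, which tends to $0$ as $n\to\infty$.

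Chebyshev's inequality then yields convergence in probability to $\sigma^2/(1-\phi_1^2)$, which is the claim. The only delicate point is the use of stationarity (or equivalently, that boundary effects from the initialization are negligible on the $O(1/(nq_n))$ variance scale); if the process is initialized at $t=-\infty$ or from the stationary distribution this is immediate, and otherwise a transient term of order $O(\phi_1^{2\ell})$ in $\PE(E_{i,\ell}^2)$ can be absorbed into an $o(1)$ bias term. Everything else is a routine second-moment computation combined with the geometric decay of the AR(1) autocovariances.
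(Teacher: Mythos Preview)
Your proof is correct and follows the same overall strategy as the paper: show that the variance of the average tends to zero and apply Chebyshev. The difference lies in how the variance is computed. The paper expands $E_{i,\ell}^2$ via the MA$(\infty)$ representation $E_{i,\ell}=\sum_{j\ge 0}\phi_1^j Z_{i,\ell-j}$, writes the variance as a quadruple sum over $j,j',k,k'$ of covariances of products of innovations, bounds each such covariance by a constant via Cauchy--Schwarz, and uses $\sum_{j\ge 0}|\phi_1|^j<\infty$; this yields a crude $O(1/n)$ bound on the total variance. You instead invoke Isserlis' formula to get $\Cov(E_{i,\ell}^2,E_{i,\ell'}^2)=2\gamma(\ell-\ell')^2$ directly and then use the summability of $\gamma(h)^2$, which is cleaner and produces the sharper $O(1/(nq_n))$ rate. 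Both routes suffice for the lemma; yours exploits Gaussianity more efficiently, while the paper's is slightly more elementary in that it avoids quoting Isserlis.
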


\begin{proof}[Proof of Lemma \ref{lem:denom}]
In the following, for notational simplicity, $q=q_n$.
Since $\PE(E_{i,\ell}^2)=\sigma^2/(1-\phi_1^2)$, it is enough to prove that
$$
\frac{1}{nq}\sum_{i=1}^n\sum_{\ell=1}^{q-1} \left(E_{i,\ell}^2-\PE(E_{i,\ell}^2)\right)\stackrel{P}{\longrightarrow} 0,\textrm{ as } n\to\infty.
$$
Since 
$$
E_{i,\ell}^2=\left(\sum_{j\geq 0}\phi_1^j Z_{i,\ell-j}\right)^2=\sum_{j,j'\geq 0}\phi_1^j\phi_1^{j'} Z_{i,\ell-j} Z_{i,\ell-j'},
$$
\begin{align}\label{eq:variance}
&\Var\left(\frac{1}{nq}\sum_{i=1}^n\sum_{\ell=1}^{q-1} \left(E_{i,\ell}^2-\PE(E_{i,\ell}^2)\right)\right)
=\frac{1}{(nq)^2}\sum_{i=1}^n\;\sum_{1\leq\ell,\ell'\leq q-1} \Cov(E_{i,\ell}^2 ; E_{i,\ell'}^2)\nonumber\\
&=\frac{1}{(nq)^2}\sum_{i=1}^n\;\sum_{1\leq\ell,\ell'\leq q-1}\;\sum_{j,j'\geq 0}\;\sum_{k,k'\geq 0}\;\phi_1^j\phi_1^{j'}\phi_1^k\phi_1^{k'}\Cov(Z_{i,\ell-j} Z_{i,\ell-j'} ; Z_{i,\ell'-k} Z_{i,\ell'-k'}).
\end{align}
By Cauchy-Schwarz inequality $|\Cov(Z_{i,\ell-j} Z_{i,\ell-j'} ; Z_{i,\ell'-k} Z_{i,\ell'-k'})|$ is bounded by a positive constant. Moreover $\sum_{j\geq 0} |\phi_1|^j<\infty$, hence 
(\ref{eq:variance}) tends to zero as $n$ tend to infinity, which concludes the proof of the lemma.
\end{proof}

\bibliographystyle{chicago}
\bibliography{biblio}

\end{document}